\newtheorem{thm}{Theorem}[section]
\newtheorem{cor}{Corollary}[section]
\newtheorem{lem}{Lemma}[section]
\newtheorem{rmk}{Remark}[section]
\newtheorem{exmp}{Example}[section]
\let \be=\beta
\let \var=\phi
\let \vare=\varepsilon
\let \de=\delta
\let \th=\theta
\let \ga=\gamma
\let \p=\partial
\let \q=\quad
\let \qq=\qquad
\let \med=\medskip
\let \smal=\smallskip
\let \dps=\displaystyle
\newcommand{\R}{\mathbb{R}}
\newcommand{\N}{\mathbb{N}}
\begin{document}


\vspace{0.2cm}

\begin{center}
\textbf{\Large{Global attractivity  for a nonautonomous Nicholson's equation with mixed monotonicities}}
	\end{center}

\centerline{\scshape Teresa Faria\footnote{Corresponding author.
E-mail:~teresa.faria@fc.ul.pt.}}
\smallskip
{\footnotesize
 \centerline{Departamento de Matem\'atica and CMAFcIO,}
   \centerline{ Faculdade de Ci\^encias, Universidade de Lisboa, Campo Grande, 1749-016 Lisboa, Portugal}}

\

\centerline{\scshape Henrique C. Prates\footnote{Research funded by a BII fellowship for the project UIDB/04561/2020 (CMAFcIO).}}
\smallskip
\smallskip
{\footnotesize
   \centerline{ Faculdade de Ci\^encias, Universidade de Lisboa, Campo Grande, 1749-016 Lisboa, Portugal}}

 
%
%
%
\vskip .5cm

\begin{abstract}  
We consider a Nicholson's  equation with multiple pairs of time-varying delays and nonlinear terms given by mixed monotone functions. Sufficient conditions for the permanence, local stability and global attractivity of its positive equilibrium are established. Our criteria depend on the size of some  delays,  improve  results in recent literature and provide  answers to some open problems.

\end{abstract}

 {\it Keywords}: 
 Nicholson equation;  mixed monotonicity; global  attractivity; stability;  permanence.

{\it 2020 Mathematics Subject Classification}: 34K12,  34K20, 34K25,  92D25.

\section{Introduction}
\setcounter{equation}{0}

The classical Nicholson's blowflies equation \begin{equation}\label{N}
    x'(t)=- \delta x(t)+ p x(t-\tau)e^{-a x(t-\tau)} \q (p,\de,\tau>0)
  \end{equation}
  has been extensively studied since its introduction by Gurney et al.~\cite{GBN}, and many scalar and multi-dimensional variants analysed in different contexts of mathematical biology. 
  See e.g. \cite{bb2017,smith, NE_mult, MR18} for biological explanations of the model and some additional references.
  
Recently, a Nicholson's model  with two different delays
\begin{equation} \label{Nich_MR}
  x'(t)=- \delta x(t)+ p x(t-\tau)e^{-a x(t-\sigma)},
  \end{equation}
  where  $p>\de>0$ and  $\tau\ge\sigma\ge 0$, was proposed and studied by  El-Morshedy and  Ruiz-Herrera \cite{MR18}, and  a criterion for  the global attractivity of its positive equilibrium $K=\frac{1}{a}\log (p/\de)$  established. The Nicholson's model \eqref{Nich_MR}  involves  a so-called {\it  mixed monotonicity} nonlinear term. To be more concrete, in \eqref{Nich_MR}
the nonlinear term is given by  $g(x(t-\tau),x(t-\sigma))$, where $g(x,y)=pxe^{-ay}$ is  monotone increasing in the first variable and monotone decreasing in the second. 


In recent years, there has been a growing interest in DDEs with  two or more delays appearing  in the same nonlinear function, possibly with a mixed monotonicity. For delay differential equations (DDEs), the presence of a unique small delay  in each nonlinear term is in general harmless, i.e.,  the  global  dynamics properties of the equation without delays are kept under small delays. However, 	  the  situation may be drastically different
if  two or more different delays are involved in each  nonlinearity, 
as exemplified  in \cite{bb2017} for a variant of the celebrated  Mackey-Glass  equation. How to generalise known results for classical DDEs to some modified versions  with two or more different delays 
 in the same nonlinear term is a  subject that lately has caught the attention of a number of researchers, since such models appear naturally in real-world problems. Several different techniques have been proposed, such as  Lyapunov functions or the theory of monotone systems, which are difficult to apply in the context of mixed monotonicity. The method in \cite{MR18}, based on an auxiliary monotone  difference equation associated with \eqref{Nich_MR}, has inspired our work. 
 However, the application of this method to  (possibly non-autonomous) equations with more than one pair of  delays is not apparent, and has not been addressed.
 

The following non-autonomous version of \eqref{Nich_MR} with time-varying delays  has also been suggested:
\begin{equation} \label{eq:nicholson_1pair}
    x'(t)=\beta(t) (p x(t-\tau(t))e^{-a x(t-\sigma(t))}-\delta x(t)) ,
\end{equation}
where $p, a, \delta \in (0, \infty  )$, $\be(t)> 0$, and $ \tau(t),\sigma(t)$ are non-negative and bounded. To obtain conditions for  the stability of the equilibria for this equation has been an open problem   \cite{bb2017}.  In fact, Berezansky and Braveman   \cite{bb2017} provided criteria for the local and global stability of the positive equilibrium    of the  Mackey-Glass-type equation   $x'(t)=\be(t)\Big [- x(t)+ \frac{a x(t-\tau(t))}{1+x^\nu(t-\sigma(t))}\Big]$
(with   $\nu>0, a>1$), and set  as an open problem a similar analysis for the Nicholson model \eqref{eq:nicholson_1pair}.

More recently, Long and Gong \cite{NE_mult} considered  a Nicholson's equation with multiple pairs of time-varying delays of the form
\begin{equation}\label{eq:nicholson}
    x'(t)=\beta (t) \bigg( \sum_{j=1}^{m} p_j  x(t-\tau_j(t)) e^ {-a_j x(t-\sigma_j(t))}-\delta x(t)\bigg)\, , \qq t\ge t_0
\end{equation}
where $p_j , a, \delta \in (0, \infty )$ and $\beta(t), \sigma_j(t), \tau_j(t)$ are continuous, non-negative and bounded, with $\be(t)$ bounded from below by a positive constant. 
If $\sum_j p_j \le\delta$, it was shown in \cite{NE_mult} that the equilibrium $0$  is a global attractor of all positive solutions, and moreover is globally exponentially stable if $\sum_j p_j  <\de$. Typically, the growth model \eqref{eq:nicholson} is used in population dynamics,   regulatory physiological systems and  other contexts, where this situation corresponds to the extinction of the population represented by $x(t)$.

The study of the global attractivity of a positive equilibrium, when it exists, is more interesting from a biological viewpoint. 
Since $f(x):=\de^{-1} \sum_j p_j  e^{-a_j x}$ is strictly decreasing with
$f(0)=\de^{-1}\sum_j p_j $ and $ \lim_{x \to +\infty} f(x) = 0,$
there is a unique positive equilibrium $K$ if and only if $ \sum_{j=1}^m  p_j  > \delta$.
In the present paper, we shall always assume $ \sum_{j=1}^m  p_j  > \delta$,
in which case the {\it carrying capacity} $K>0$ is defined by the identity
\begin{equation}\label{equilK} 
 p:=   \sum_{j=1}^m p_j  e^{-a_j K}=\delta.
\end{equation}
 For  \eqref{eq:nicholson_1pair}, if $p>\delta$ the positive
equilibrium is explicitly given by
 \begin{equation}\label{Kexplicit}K=\frac{1}{a} \log (\frac{p}{\delta}).\end{equation}
 
 In a recent work, Huang et al. \cite{huang20} have also addressed the global attractivity of the positive equilibrium of the ``neoclassical growth model" obtained by replacing in \eqref{eq:nicholson} $ x(t-\tau_j(t)) $ by $ x^\ga(t-\tau_j(t)) \ (1\le j\le m)$ for some fixed $\ga\in (0,1)$. See \cite{huang20} for additional references on real world applications of such models.
 
 Motivated by the above mentioned works \cite{NE_mult, MR18}, the main goal of this paper is to establish sufficient conditions for the equilibrium $K$ of \eqref{eq:nicholson} to be a global attractor of all positive solutions.

To this end, we introduce briefly some notation and  the abstract framework to deal with the DDE model \eqref{eq:nicholson}. In what follows, we always assume the  general assumptions below:
\begin{itemize}
\item[] $p_j , a_j, \delta \in (0, \infty )$,  $\beta, \sigma_j, \tau_j:[t_0, \infty )\to [0, \infty )$ are continuous and bounded ($1\le j\le m$), 
with
\begin{equation}\label{eq:beta_def}
0<    \beta^- :=\inf_{t\ge t_0} \beta(t) \le \beta(t) \le \sup_{t\ge t_0} \beta(t)=: \beta^+ .
    \end{equation}
 \end{itemize} 
 In what follows, denote
$$p=\sum_{j=1}^{m} p_j ,\q a^+=\max_j a_j,\q a^-=\min_j a_j,$$
assume $p>\de$
 and let $K>0$ be the equilibrium defined by  equation \eqref{equilK}. 
Define
$$\tau=\max\{\sup_{t\ge t_0} \tau_j(t), \sup_{t\ge t_0} \sigma_j(t):j=1,\dots,m\}.$$
For \eqref{eq:nicholson}, the  space $C:=C([-\tau, 0];\R)$ equipped with  the supremum norm  $\|\phi\|=\max_{\th\in[-\tau,0]}|\phi(\th)|$ will be taken as the phase space \cite{smith}.
Furthermore, due to the biological motivation of the model, we are only interested in non-negative solutions of \eqref{eq:nicholson}. Thus, we  consider 
$C_0^+:=\big\{\phi\in C : \phi(\th)\ge 0\ {\rm on}\ [-\tau,0),  \phi(0)>0 \}$ as the set of admissible initial conditions. For $t\ge t_0$,  $x_t$ as usual designates  the element in $C$  given by $x_t(\th)=x(t+\th), -\tau\le \th\le 0$. In this way,  we write $x_t(t_0, \phi)\in C$, or $x(t; t_0, \phi)\in \R$, to denote the solution of \eqref{eq:nicholson} with the initial condition 
\begin{equation}\label{eq:IC}
    x_{t_0}=\phi\in C_0^+,
\end{equation}
defined on a maximal interval $[t_0, \eta(\phi))$.
Presently, the usual concepts of stability  always refer to solutions with initial conditions in $C_0^+$. In this way, the equilibrium $K$ is called a {\it global attractor} of  \eqref{eq:nicholson} if it is globally attractive in $C_0^+$, i.e.,
$\lim_{t\to\infty}x(t)=K$
for every solution $x(t)=x(t;t_0,\phi)$ of \eqref{eq:nicholson}  with initial condition \eqref{eq:IC}; $K$ is {\it gobally asymptotically stable} if it is stable and globally attractive.

The paper is organized as follows. With $p >\de$,  in Section 2  we show that  \eqref{eq:nicholson} is permanent  and
 derive sufficient conditions for  the local asymptotic stability of its positive equilibrium $K$. Relying on the permanence previously established and with  the methodology in  \cite{MR18} as a key ingredient, the main resuls  on the global attractivity of $K$ are obtained in Section 3. Our criteria   largely generalise the results obtained in \cite{MR18} for \eqref{Nich_MR}
and in particular provide an  answer to the above mentioned open problem concerning \eqref{eq:nicholson_1pair} raised in \cite{bb2017}.  A couple of examples illustrate our results.
Some conclusions end the paper.

\section{Permanence and local stability}\label{sec:permanence}
\setcounter{equation}{0}

 In abstract form, \eqref{eq:nicholson} is written as
$x'(t)=F(t,x_t),\q t\ge t_0,$ where
$$F(t,\phi)=\beta (t) \bigg( \sum_{j=1}^{m} p_j  \phi(-\tau_j(t)) e^ {-a_j \phi(-\sigma_j(t))}-\delta \phi(0)\bigg),\q t\ge t_0,\phi\in C_0^+.$$
For $t\ge t_0,\phi\in C_0^+$, $F(t,\phi)\ge 0$ if $\phi(0)=0$ and $F(t,\phi)\le g(t,\phi)$, where the function
$ g(t,\phi):=\beta (t) \Big( \sum_{j=1}^{m} p_j  \phi(-\tau_j(t))- \delta \phi(0)\Big)$ satisfies the quasi-monotone condition in \cite[p.~78]{smith1}. From results of comparison of solutions \cite{smith1}, solutions of \eqref{eq:nicholson} with initial conditions \eqref{eq:IC} are  defined and positive on $[t_0,\infty)$.

We recall the standard definitions of (uniform) persistence and permanence. The DDE  \eqref{eq:nicholson} is said to be {\it persistant} if 
 there exists  $m>0$ such that for every $\var\in C_0^+$ there is $ t_*=t_*(\phi)$ such that
 the solution $x(t; t_0, \phi)$ satisfies \begin{equation*}
 x(t;t_0, \phi)\ge m\q {\rm for} \q  t \ge t_*.
\end{equation*}
Eq. \eqref{eq:nicholson} is said to be {\it permanent} if it is both dissipative and persistant, i.e., there are constants $m,M>0$ such that for $\var\in C_0^+$ there is $ t_*=t_*(\phi)$ such that
 \begin{equation*}
m\le  x(t;t_0, \phi)\le M\q {\rm for} \q  t \ge t_*.
\end{equation*}

We now establish the permanence of  \eqref{eq:nicholson}, noting however that we are mostly interested in the permanence as an auxiliary result to derive conditions for the global attractivity of $K$. 
\begin{thm} \label{thm:permanence} If $p>\delta$,
then \eqref{eq:nicholson} is permanent. Moreover, 
\begin{equation}\label{perm}
     Ke^{-2\delta \beta^+ \tau} \le \liminf_{t\to \infty}x(t)\le \limsup_{t\to \infty}x(t) \le Ke^{2(\delta + p) \beta^+ \tau}
\end{equation}
 for any solution $x(t)$ of \eqref{eq:nicholson} with initial condition in $C_0^+$.
\end{thm}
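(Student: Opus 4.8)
The plan is to control the logarithmic derivative $x'(t)/x(t)$ over windows of length $\tau$, and then to pin down $\liminf_{t\to\infty}x$ and $\limsup_{t\to\infty}x$ by evaluating the equation at suitable extremal points, comparing the resulting inequalities with the defining identity $\sum_j p_j e^{-a_j K}=\delta$. The one estimate available with no extra work is the lower bound on the logarithmic derivative: since the nonlinear sum is non-negative,
\[
\frac{x'(t)}{x(t)} = \beta(t)\Big(\frac{1}{x(t)}\sum_{j=1}^m p_j x(t-\tau_j(t)) e^{-a_j x(t-\sigma_j(t))} - \delta\Big) \ge -\delta\beta(t) \ge -\delta\beta^+ .
\]
Integrating this over any subinterval of length at most $\tau$ shows that a past value cannot exceed the present one by more than the factor $e^{\delta\beta^+\tau}$; in particular $x(t-\sigma_j(t)) \le x(t) e^{\delta\beta^+\tau}$ and $x(t-\tau_j(t)) \le x(t) e^{\delta\beta^+\tau}$. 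This is the workhorse of the argument.

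I would first prove dissipativity. Taking a sequence of record-high times $t_k\to\infty$ (so that $x(t_k)=\max_{[t_0,t_k]}x$ and $x'(t_k)\ge 0$), the equation gives $\delta x(t_k) \le \sum_j p_j x(t_k-\tau_j(t_k)) e^{-a_j x(t_k-\sigma_j(t_k))} \le x(t_k)\sum_j p_j e^{-a_j x(t_k-\sigma_j(t_k))}$, using $x(t_k-\tau_j(t_k))\le x(t_k)$. The point is then to bound the decay factor from below, i.e.\ to bound $x(t_k-\sigma_j(t_k))$ below by a fixed multiple of $x(t_k)$; this requires an a priori bound on the upward growth rate over the window, which follows by feeding the lower bound on $x'/x$ back into the equation. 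One then obtains $\delta \le \sum_j p_j e^{-a_j c\, x(t_k)}$ for a constant $c=c(\tau)>0$, and since $z\mapsto\sum_j p_j e^{-a_j z}$ is strictly decreasing with value $\delta$ at $z=K$, the record highs stay bounded, and after tracking the constants one reaches $\limsup_{t\to\infty}x(t)\le K e^{2(\delta+p)\beta^+\tau}$.

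This is the step I expect to be the main obstacle: because of the mixed monotonicity, the factor $e^{-a_j x(t-\sigma_j(t))}$ is evaluated at a time different from the prefactor $x(t-\tau_j(t))$, so bounding it from below forces us to control how fast $x$ can increase, which is the harder of the two directions (the downward direction being the clean bound above). All the subtlety of the exponential-in-$\tau$ constants lives here.

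Once boundedness is known, $U:=\limsup_{t\to\infty}x$ and $L:=\liminf_{t\to\infty}x$ are finite, and the fluctuation lemma supplies valley times $t_n\to\infty$ with $x(t_n)\to L$ and $x'(t_n)\to 0$. At such points the equation reads $\delta x(t_n) = \sum_j p_j x(t_n-\tau_j(t_n)) e^{-a_j x(t_n-\sigma_j(t_n))} + o(1)$. Bounding the prefactor below by $L-\varepsilon$ and the exponents above via the workhorse bound $x(t_n-\sigma_j(t_n))\le x(t_n)e^{\delta\beta^+\tau}$, then letting $n\to\infty$ and $\varepsilon\to 0$, yields $\delta \ge \sum_j p_j e^{-a_j L e^{\delta\beta^+\tau}}$, whence $Le^{\delta\beta^+\tau}\ge K$ by monotonicity, i.e.\ $L\ge Ke^{-\delta\beta^+\tau}\ge Ke^{-2\delta\beta^+\tau}$. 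Combining the two one-sided estimates produces constants $m=Ke^{-2\delta\beta^+\tau}$ and $M=Ke^{2(\delta+p)\beta^+\tau}$, giving permanence together with the bounds \eqref{perm}.
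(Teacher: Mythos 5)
Your route is genuinely different from the paper's: the paper does not prove these estimates from scratch, but verifies that \eqref{eq:nicholson} fits the framework of \cite[Theorem 5.6]{bb2016} (writing the right-hand side as $\sum_j f_j(t,x(t-\tau_j(t)),x(t-\sigma_j(t)))-w(t,x(t))$ with $f_j(t,u,v)=\beta(t)p_j u e^{-a_j v}$ and $w(t,u)=\delta\beta(t)u$, then computing $\limsup_t \sum_j f_j(t,u,M)/w(t,u)=\delta^{-1}\sum_j p_j e^{-a_j M}<1$ for $M>K$ and the analogous $\liminf>1$ for $\mu<K$), and the two exponential factors in \eqref{perm} are exactly the ones delivered by that cited theorem. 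Your self-contained fluctuation argument is a legitimate alternative in spirit, and your lower-bound half is essentially sound: the estimate $x(t-\sigma_j(t))\le x(t)e^{\delta\beta^+\tau}$ is correct, and at valley times it yields $\delta\ge\sum_j p_j e^{-a_j L e^{\delta\beta^+\tau}}$, hence $L\ge Ke^{-\delta\beta^+\tau}$ (in fact sharper than \eqref{perm}) --- \emph{provided} you already know $L>0$. As written, ``bounding the prefactor below by $L-\varepsilon$'' is vacuous when $L=0$, so you need a separate persistence step first (e.g.\ at record lows $x(s_k-\tau_j(s_k))\ge x(s_k)$ while $x(s_k-\sigma_j(s_k))\le x(s_k)e^{\delta\beta^+\tau}\to 0$ would force $\delta\ge p$, a contradiction).

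The genuine gap is the upper bound. You correctly identify the hard step --- bounding $x(t_k-\sigma_j(t_k))$ from below by $c\,x(t_k)$ --- but you never produce $c$, and the only $c$ your own tools supply does not give the stated constant. Feeding $x(t-\tau_j(t))\le x(t)e^{\delta\beta^+\tau}$ back into the equation gives the upward control $x'(t)/x(t)\le p\beta^+ e^{\delta\beta^+\tau}$, hence $c=e^{-p\beta^+\tau e^{\delta\beta^+\tau}}$ and, at near-supremum times, $\delta\le\sum_j p_j e^{-a_j U c}$, i.e.\ $U\le Ke^{p\beta^+\tau e^{\delta\beta^+\tau}}$. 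This does prove dissipativity and hence permanence, but the resulting bound is doubly exponential in $\tau$ and is not the claimed $Ke^{2(\delta+p)\beta^+\tau}$; the phrase ``after tracking the constants'' hides precisely the part of the argument that \cite[Theorem 5.6]{bb2016} was invoked to supply. (The cruder estimate $x(t_k-\sigma_j(t_k))\ge(1-p\beta^+\tau)x(t_k)$ only helps when $p\beta^+\tau<1$.) To prove the statement as written you must either reproduce the integration argument of \cite{bb2016} that yields the factor $e^{2(\delta+p)\beta^+\tau}$, or cite it as the paper does.
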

\begin{proof}
We rewrite \eqref{eq:nicholson} as 
\begin{equation*}
    x'(t)=\sum_j f_j\big(t, x(t-\tau_j(t)), x(t-\sigma_j(t))\big) - w(t, x(t))
\end{equation*}
where $f_j(t, u, v)=\beta(t)p_j  u e^{-a_j v}$ and $w(t, u)=\delta \beta(t) u$. 
The functions $f_j$ are monotone increasing in $u$ and monotone decreasing in $v$. Furthermore, 
\begin{equation*}
    \sum_j f_j(t, u_j, v_j) = \beta(t) \sum_j p_j  u_j e^{-a_j v_j} \le \beta^+ \sum_j p_j  \Bar{u} \le A \Bar{u}\end{equation*}
where $ \Bar{u}=\max_{1\le j\le m} \{u_j \} $, $A:= \beta^+ p >0,$
and
\begin{equation*}
    B u \le w(t, u) \le C u \q{\rm where }\q B:= \delta \beta^- ,\ C:= \delta \beta^+.
\end{equation*}
Let $M>0$. Then,
\begin{equation*}
     L=L(M):=\limsup_{t \to \infty } \frac{\sum_j f_j(t, u, M)}{w(t, u)}
     = \frac{\sum_j p_j  e^{-a_j M}}{\delta}.
\end{equation*}
Thus, $L<1$ as long as $M>K$. From \cite[Theorem 5.6]{bb2016}, for any $M>K$ we obtain
\begin{equation*}
    \limsup_{t\to \infty}x(t) \le M e^{2(\delta+p)\beta^+ \tau},
\end{equation*}
yielding that \eqref{eq:nicholson} is dissipative.
Next, for $\mu>0$ we have
\begin{equation*}
    l=l(\mu):=\liminf_{t \to \infty } \frac{\sum_j f_j(t, u, \mu)}{w(t, u)}
    =\frac{\sum_j p_j  e^{-a_j \mu}}{\delta}.
\end{equation*}
Thus, $l>1$ if $\mu<K$. From \cite[Theorem 5.6]{bb2016}, for $\mu <K$ we derive
\begin{equation*}
    \liminf_{t\to \infty}x(t)\ge \mu e^{-2\delta \beta^+ \tau},
\end{equation*}
hence \eqref{eq:nicholson} is persistant.
Combining these results for $M=K+\varepsilon$ and $\mu =K-\varepsilon$ and letting $\vare\to 0^+$, \eqref{perm} follows.
\end{proof}

 In the case of a single pair of time-varying delays as in \eqref{eq:nicholson_1pair}, instead of  \cite{bb2016}, see  \cite{GHM} for an alternative approach, with possible better uniform lower and upper bounds then the ones  in \eqref{perm}.

\begin{rmk}{\rm Theorem \ref{thm:permanence} largely extends the permanence result obtained in \cite{MR18} for the Nicholson's equation \eqref{Nich_MR}, where, besides  a single pair of {\it constant} delays $\tau(t)\equiv \tau,\sigma(t)\equiv \sigma$, the additional constraint $0\le \sigma\le \tau$ is imposed.}\end{rmk}

%

 The local stability of the positive equilibrium $K$ is now studied. 
By setting $u(t)= x(t)-K$ and dropping the nonlinear terms, one obtains 
 the linearised equation about $K$ given by
\begin{equation}\label{eq:nicholson_lin}
   u'(t)= \beta (t) \bigg( \sum_j p_j  e^{-a_j K} [u(t-\tau_j(t)) -a_j K u(t-\sigma_j(t))]-\delta u(t) \bigg).
\end{equation}
\begin{thm}\label{thmLAS2}
If $p >\delta$ and 
\begin{equation}\label{eq:las_K}
    \limsup_{t \to +\infty}  \sum_{j=1}^m
    a_j p_j  e^{-a_j K}\int_{t-\sigma_j(t)}^{t} \beta(s)\, \dd s <\frac{\sum_{j=1}^m a_j p_j  e^{-a_j K}}{2\delta + K\sum_{j=1}^m a_j p_j  e^{-a_j K}},
\end{equation}
then $K$ is locally exponentially stable for \eqref{eq:nicholson}.
\end{thm}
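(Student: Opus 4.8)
The plan is to establish (uniform) exponential stability of the linear equation \eqref{eq:nicholson_lin} and then invoke the principle of linearized stability: since the delays and coefficients of \eqref{eq:nicholson} are bounded and the remainder left after extracting the linear part in $u=x-K$ is $o(|u|)$ near $K$, uniform exponential stability of the variational equation \eqref{eq:nicholson_lin} transfers to local exponential stability of $K$ for \eqref{eq:nicholson}. So everything reduces to the zero solution of \eqref{eq:nicholson_lin}.

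To analyse \eqref{eq:nicholson_lin}, I would first exploit the equilibrium identity \eqref{equilK}, $\sum_j p_j e^{-a_j K}=\delta$. Writing $b_j:=p_j e^{-a_j K}$ (so $\sum_j b_j=\delta$), $c_j:=a_j b_j$ and $C:=\sum_j c_j=\sum_j a_j p_j e^{-a_j K}$, and using $u(t)-u(t-\sigma_j(t))=\int_{t-\sigma_j(t)}^t u'(s)\,\dd s$, I would recast \eqref{eq:nicholson_lin} as
\[
u'(t)=-(\delta+KC)\beta(t)\,u(t)+\beta(t)\sum_j b_j\,u(t-\tau_j(t))+K\beta(t)\sum_j c_j\int_{t-\sigma_j(t)}^t u'(s)\,\dd s .
\]
The purpose of this regrouping is that the diagonal decay rate $(\delta+KC)\beta(t)$ strictly dominates the total positive delayed weight $\sum_j b_j\,\beta(t)=\delta\beta(t)$, so this ``core'' is exponentially stable, whereas the destabilising negative feedback is now concentrated in the last sum, whose strength is measured precisely by the $\sigma_j$-windows; the $\tau_j$ delays enter only through their boundedness, which is exactly why \eqref{eq:las_K} constrains the $\sigma_j$ alone.

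Next I would run a variation-of-constants estimate with the kernel $e^{-(\delta+KC)\int_s^t\beta}$, together with the a priori bound read off directly from \eqref{eq:nicholson_lin},
\[
|u'(s)|\le\beta(s)\Big(\sum_j b_j+KC+\delta\Big)\sup_{[s-\tau,s]}|u|=(2\delta+KC)\,\beta(s)\sup_{[s-\tau,s]}|u| .
\]
Since $\int_{s_0}^t e^{-(\delta+KC)\int_s^t\beta}\beta(s)\,\dd s\to(\delta+KC)^{-1}$ as $t\to\infty$, setting $\bar u:=\limsup_{t\to\infty}|u(t)|$ and letting $S$ denote the left-hand side of \eqref{eq:las_K}, the positive-feedback term contributes at most $\frac{\delta}{\delta+KC}\,\bar u$ and the $\sigma_j$-term at most $\frac{K(2\delta+KC)}{\delta+KC}S\,\bar u$, whence
\[
\bar u\le\frac{\delta+K(2\delta+KC)S}{\delta+KC}\,\bar u .
\]
The bracketed factor is strictly below $1$ exactly when $(2\delta+KC)S<C$, i.e. precisely under \eqref{eq:las_K}, and this forces $\bar u=0$. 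Note how the threshold $\frac{C}{2\delta+KC}$ emerges naturally, the $2\delta+KC$ being $\sum_j b_j+KC+\delta$ from the derivative bound.

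To upgrade this asymptotic decay to uniform exponential stability I would repeat the computation for the weighted function $v(t)=e^{\gamma\int_{t_0}^t\beta}u(t)$: the diagonal rate becomes $(\delta+KC-\gamma)\beta$ while the delayed terms acquire bounded factors $e^{\gamma\int_{t-\tau_j}^t\beta}\le e^{\gamma\beta^+\tau}$, so by the strictness of \eqref{eq:las_K} the contraction constant stays below $1$ for all sufficiently small $\gamma>0$, yielding $|u(t)|\le M e^{-\gamma\beta^-(t-t_0)}$ (a larger $\tau$ merely forces a smaller rate $\gamma$). The main obstacle is exactly this quantitative and uniform step: the $\limsup$ argument only delivers attractivity and implicitly presupposes $\bar u<\infty$, so turning it into a genuine exponential bound is best done through a Bohl--Perron/contraction argument on an exponentially weighted space, which simultaneously supplies the a priori boundedness and the decay rate. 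A minor point requiring care is the passage from the moving-window suprema $\sup_{[s-\tau,s]}|u|$ to $\bar u$, which is legitimate because $\tau<\infty$ and $\beta^->0$.
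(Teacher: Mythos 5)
Your argument is correct and lands on exactly the right threshold, but it is organised quite differently from the paper's proof. The paper does essentially no analysis of its own here: it rewrites \eqref{eq:nicholson_lin} as $u'(t)+\sum_{j=0}^{2m}B_j\beta(t)u(r_j(t))=0$, sorts the indices into those with positive coefficients ($j=0$ and the $\sigma_j$-terms, summing to $\delta+K\sum_j a_jp_je^{-a_jK}$) and those with negative coefficients (the $\tau_j$-terms, summing to $-\delta$), and then invokes a ready-made exponential stability criterion for such equations \cite[Corollary 1.3]{bb2006}, whose hypothesis becomes \eqref{eq:las_K} after dividing both sides by $K$; the principle of linearized stability finishes the proof exactly as in your last step. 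What you propose is, in effect, a self-contained proof of the relevant special case of that cited criterion: the substitution $u(t-\sigma_j(t))=u(t)-\int_{t-\sigma_j(t)}^{t}u'(s)\,\mathrm{d}s$, the a priori bound $|u'(s)|\le(2\delta+KC)\beta(s)\sup_{[s-\tau,s]}|u|$, and the variation-of-constants estimate with kernel $e^{-(\delta+KC)\int_s^t\beta}$ reproduce precisely the numerator $C$ and the denominator $2\delta+KC$ in \eqref{eq:las_K}. This buys transparency (the reader sees where the constant $2\delta+K\sum_j a_jp_je^{-a_jK}$ comes from) at the cost of length; the paper's route is shorter but rigorous only by reference. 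The one genuinely delicate point you correctly identify yourself: the self-bounding inequality $\bar u\le\kappa\bar u$ with $\kappa<1$ presupposes $\bar u<\infty$ and only gives attractivity, so the a priori boundedness and the uniform exponential rate must be supplied by the weighted (Bohl--Perron type) version of the same estimate. That is exactly the content of the corollary the paper cites, so your plan is complete provided that step is actually carried out rather than only sketched.
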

\begin{proof}
Write \eqref{eq:nicholson_lin} as 
\begin{equation*}
    u'(t)+\sum_{j=0}^{2m} b_j(t) u(r_j(t))=0
\end{equation*}
where $b_j(t)=B_j \beta(t)$,
\begin{align*}
    & B_j=\begin{cases}
    \delta & j=0\\
    - p_j  e^{-a_j K} & j=1, ..., m\\
      a_jK  p_j  e^{-a_j K} & j=m+1, ..., 2m
    \end{cases},
    &r_j(t)=\begin{cases}
    t &j=0\\
    t-\tau_j(t) &j=1, ..., m\\
    t-\sigma_j(t) &j=m+1, ..., 2m
    \end{cases},
\end{align*}
 for  all $j=0, ..., 2m$ and  $\beta(t)\ge \beta^- >0$. Note that \eqref{eq:nicholson_lin} has the form required in \cite[Corollary 1.3]{bb2006}. Let $I=\{0, m+1, ..., 2m\}$ and $J=\{1, ..., m\}$. Then,
\begin{equation*}
    \sum_{j\in I} B_j=\delta +K\sum_{j=1}^m a_j p_j  e^{-a_j K}>0, \q \sum_{j\in J} B_j=-\delta.
\end{equation*}
Following \cite{bb2006}, we have exponential stability for \eqref{eq:nicholson_lin} if
\begin{equation*}
    \limsup_{t \to +\infty} \sum_{j\in I} \abs{B_j}\int_{r_j(t)}^t \beta(s)\, \dd s < \frac{\sum_{j\in I}B_j-\sum_{j\in J}\abs{B_j}}{\sum_{j=0}^{2m} \abs{B_j}}.
\end{equation*}
Hence, \eqref{eq:nicholson_lin} is exponentially stable if  \eqref{eq:las_K} holds.
By the principle of linearized stability \cite{smith}, this yields  the  result.
\end{proof}

Considering an uniform upper bound in the integrals in \eqref{eq:las_K}, we obtain the criterion below.
\begin{cor}\label{cor3.1}
Let $p >\delta$ and define
\begin{equation}\label{zeta_M}
\zeta_M =\max_{1\le j\le m} \limsup_{t \to \infty} \int_{t-\sigma_j(t)}^{t}  \beta(s)\, \dd s.
\end{equation}
If
\begin{equation*}
  \zeta_M<\frac{1}{2\delta+K(\sum_j p_j a_je^{-a_jK})},
\end{equation*}
then $K$ is locally exponentially stable for \eqref{eq:nicholson}. In particular, this holds if
\begin{equation}\label{eq:las_K2}
 \zeta_M<\frac{1}{\delta(2+a^+K)}.\end{equation}
\end{cor}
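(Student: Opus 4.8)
The plan is to deduce the corollary directly from Theorem \ref{thmLAS2} by replacing the delay-dependent integrals appearing in \eqref{eq:las_K} with the single uniform quantity $\zeta_M$. First I would note that, since $\beta$ is bounded and each $\sigma_j$ is bounded, the integral $\int_{t-\sigma_j(t)}^t\beta(s)\,\dd s$ is bounded above by $\beta^+\sup_t\sigma_j(t)<\infty$, so that each $\limsup_{t\to\infty}\int_{t-\sigma_j(t)}^t\beta(s)\,\dd s$ exists as a finite nonnegative number. Subadditivity of the limit superior then yields
\begin{equation*}
\limsup_{t\to+\infty}\sum_{j=1}^m a_jp_je^{-a_jK}\int_{t-\sigma_j(t)}^t\beta(s)\,\dd s
\le \sum_{j=1}^m a_jp_je^{-a_jK}\limsup_{t\to\infty}\int_{t-\sigma_j(t)}^t\beta(s)\,\dd s
\le \Big(\sum_{j=1}^m a_jp_je^{-a_jK}\Big)\zeta_M,
\end{equation*}
the last step being nothing more than the definition of $\zeta_M$ as the maximum over $j$.

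Next I would feed the standing hypothesis $\zeta_M<\big(2\delta+K\sum_j a_jp_je^{-a_jK}\big)^{-1}$ into this bound. Multiplying through by the positive quantity $\sum_j a_jp_je^{-a_jK}$ shows that the left-hand side of \eqref{eq:las_K} is strictly smaller than $\frac{\sum_j a_jp_je^{-a_jK}}{2\delta+K\sum_j a_jp_je^{-a_jK}}$, which is exactly its right-hand side. Thus \eqref{eq:las_K} is satisfied, and Theorem \ref{thmLAS2} immediately gives the local exponential stability of $K$.

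Finally, for the explicit sufficient condition \eqref{eq:las_K2} I would simply compare the two denominators: it suffices to verify that $\frac{1}{\delta(2+a^+K)}\le\big(2\delta+K\sum_j a_jp_je^{-a_jK}\big)^{-1}$, equivalently that $\sum_j a_jp_je^{-a_jK}\le \delta a^+$. This follows at once from $a_j\le a^+$ together with the equilibrium identity \eqref{equilK}, since $\sum_j a_jp_je^{-a_jK}\le a^+\sum_j p_je^{-a_jK}=a^+\delta$. Hence \eqref{eq:las_K2} implies the previous hypothesis, and the result is complete.

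I do not expect any genuine obstacle here, as the whole argument is a short chain of elementary estimates resting on Theorem \ref{thmLAS2}. The only point that deserves a moment of care is the passage from the limit superior of a sum to the sum of the individual limit superiors; this is legitimate precisely because each summand is nonnegative and bounded, so that all the relevant limit superiors are finite.
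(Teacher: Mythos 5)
Your argument is correct and is exactly the route the paper takes (the paper merely states that the corollary follows by ``considering a uniform upper bound in the integrals'' in \eqref{eq:las_K}): you bound each integral's limit superior by $\zeta_M$, verify \eqref{eq:las_K}, and reduce \eqref{eq:las_K2} to $\sum_j a_jp_je^{-a_jK}\le a^+\delta$ via the equilibrium identity \eqref{equilK}. Nothing further is needed.
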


In the case of only one pair of mixed delays, from the identity $aK=\log(p/\de)$ the next corollary follows immediately. For an alternative proof, one can use \cite[Lemma 3.4]{bb2017}.

\begin{cor}\label{corLAS}
If $p>\delta$ and 
\begin{equation}\label{cond:las}
\zeta_M:=\limsup_{t \to \infty }\int_{t-\sigma(t)}^{t} \beta (s)\, \dd s < \frac{1}{ \delta(2+\log(p/\de))}\, ,
\end{equation} then $K$ is locally exponentially stable for \eqref{eq:nicholson_1pair}.
\end{cor}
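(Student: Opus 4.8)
The plan is to read Corollary~\ref{corLAS} as the single-pair specialization ($m=1$) of the general model \eqref{eq:nicholson}: setting $p_1=p$, $a_1=a$, $\tau_1=\tau$, $\sigma_1=\sigma$ turns \eqref{eq:nicholson} into \eqref{eq:nicholson_1pair}, and its positive equilibrium is then given explicitly by \eqref{Kexplicit}, namely $K=\tfrac1a\log(p/\delta)$. Since all the analytic work --- linearization about $K$, rewriting as a scalar linear DDE $u'(t)+\sum_j b_j(t)u(r_j(t))=0$, and the application of the Berezansky--Braverman exponential stability test from \cite{bb2006} --- has already been carried out in Theorem~\ref{thmLAS2} and Corollary~\ref{cor3.1}, I would derive \eqref{cond:las} purely by algebraic substitution, with no new estimates needed.

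Concretely, I would first record the two equivalent forms of the equilibrium identity for $m=1$: from \eqref{Kexplicit} we have $aK=\log(p/\delta)$ and $e^{-aK}=\delta/p$, the latter being exactly the defining relation \eqref{equilK}, $pe^{-aK}=\delta$, in the single-pair case. Next I would specialize the sharp threshold on the right-hand side of the criterion in Corollary~\ref{cor3.1}: the single-term sum evaluates to $\sum_j p_j a_j e^{-a_j K}=p\,a\,e^{-aK}=a\delta$, so that
\begin{equation*}
\frac{1}{2\delta+K\sum_j p_j a_j e^{-a_j K}}=\frac{1}{2\delta+aK\delta}=\frac{1}{\delta(2+aK)}=\frac{1}{\delta\big(2+\log(p/\delta)\big)}.
\end{equation*}
Because for $m=1$ the quantity $\zeta_M$ in \eqref{zeta_M} reduces precisely to $\limsup_{t\to\infty}\int_{t-\sigma(t)}^{t}\beta(s)\,ds$, the hypothesis \eqref{cond:las} is literally the condition of Corollary~\ref{cor3.1}, and local exponential stability of $K$ for \eqref{eq:nicholson_1pair} follows at once.

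There is essentially no analytic obstacle here; the only point deserving a moment's care is the bookkeeping. One should check that the sharp bound above coincides, when $m=1$, with the more explicit bound \eqref{eq:las_K2}: this holds because $a^+=a$ forces $a^+K=aK=\log(p/\delta)$, so $\delta(2+a^+K)=\delta(2+\log(p/\delta))$ and the two thresholds agree. As the excerpt notes, an independent derivation is available through \cite[Lemma~3.4]{bb2017}, but the specialization of Corollary~\ref{cor3.1} is the most direct route and is the one I would present.
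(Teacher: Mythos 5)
Your proposal is correct and is exactly the paper's intended argument: the authors state that the corollary "follows immediately" from the identity $aK=\log(p/\delta)$, i.e., by specializing Corollary \ref{cor3.1} (equivalently Theorem \ref{thmLAS2}) to $m=1$ and computing $\sum_j p_ja_je^{-a_jK}=pae^{-aK}=a\delta$, so the threshold becomes $\frac{1}{\delta(2+\log(p/\delta))}$. Your bookkeeping matches the paper's route in every detail.
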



\section
{Global  attractivity of the positive equilibrium }
\setcounter{equation}{0}

The next goal is to establish sufficient conditions for the global attractivity of the positive equilibrium $K$ (in the set of all positive solutions). 

As in \cite{MR18}, to preclude the existence of  solutions for which $\liminf_{t\to\infty}x(t)<\limsup_{t\to\infty}x(t)$, 
  we shall use global attractivity results for difference equations of the form
\begin{equation}\label{diffeq}
x_{n+1}=f(x_n),\qq n\in \N_0,
\end{equation} 
where $f:I\to I$ is a continuous function on a real interval $I$ and the initial data $x_0\in I$. 

For a compact interval $I=[a,b]$ ($a<b$), the pioneering work of Coppel \cite{coppel}   shows that if the equation $f^2(x)=x$ (where $f^2=f\circ f$) has no roots except the roots of $f(x)=x$, then all solutions of the difference equation \eqref{diffeq}
converge to a fixed point of $f$. In particular, if there is a unique fixed point $x^*$ of $f$, then $x^*$ is a global attractor for
\eqref{diffeq}.
In applications, however, it is often rather difficult to find the fixed points of $f^2$. In the case of $C^1$ functions with a unique fixed point $x^*$, the assumption $|f'(x)|<1$  would show that $x^*$ is the unique fixed point of $f^2$, but again this condition is usually not checkable in a simple way. Lately,  more powerful  criteria  (and easier to verify) were given  in \cite{ML}. For convenience of the reader, we include below some selected  results derived from  \cite[Lemma 2.5 and Corollary 2.9]{ML}. (With $a\in\R$ and $b=\infty$, $I=[a,b]$ denotes the interval $[a,\infty)$.)

\begin{lem}\label{lemML}\cite{ML}
Let $f:I\to I$ be a continuous function on a real interval $I=[a,b]$ ($a<b$ with $b\in \R$ or $b=\infty$) with 
a unique fixed point $x^*\in (a,b)$ and such that
$$(f(x)-x^*)(x-x^*)<0\qq {\rm for \ all}\ x\in (a,b), x\ne x^*.$$ 
The following assertions hold:
\begin{enumerate}
\item[(a)] If $x^*$ is a global attractor for \eqref{diffeq}, then  there are no points $c,d\in I$ with $c< d$ and $f([c,d])\supset [c,d]$.
\item[(b)] If $f$ is a $C^3$  function
and  there are constants $c,d\in I$ with $c<x^*<d$ such that:\vskip 0mm
(i)  $Sf(x)<0$ for $x\in (c,d)$, where  $Sf(x)$ is the Schwarzian derivative of $f$, defined by
$$ Sf(x):=\frac{f'''(x)}{f'(x)}-\frac{3}{2}\left(\frac{f''(x)}{f'(x)}\right)^2,$$
 (ii) $-1\le f'(x^*)<0$,\vskip 1mm
 then $x^*$ is a global attractor for \eqref{diffeq}.
\end{enumerate}
\end{lem}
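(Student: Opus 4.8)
The plan is to prove (a) and (b) by separate arguments, in both cases exploiting the oscillation hypothesis $(f(x)-x^*)(x-x^*)<0$, which forces every orbit of \eqref{diffeq} to alternate between the two sides of $x^*$ and hence makes $g:=f\circ f$ leave each of $[a,x^*]$ and $[x^*,b]$ invariant. In particular any $2$-cycle $\{u,v\}$ of $f$ must straddle the fixed point, $u<x^*<v$, a remark I will use repeatedly.

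For (a) I would argue by contraposition. Assume $c<d$ with $f([c,d])\supseteq[c,d]$. Since $A\supseteq B$ implies $f(A)\supseteq f(B)$, an easy induction gives $f^{n}([c,d])=f\big(f^{n-1}([c,d])\big)\supseteq f([c,d])\supseteq[c,d]$ for every $n\ge1$; thus the iterated images of $[c,d]$ never shrink below the nondegenerate interval $[c,d]$. On the other hand, if $x^*$ were a global attractor, the forward images of the compact set $[c,d]$ would have to collapse onto $\{x^*\}$, i.e. $\operatorname{diam} f^{n}([c,d])\to0$, which is incompatible with $f^{n}([c,d])\supseteq[c,d]$. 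The main obstacle here is to justify this collapse: pointwise convergence $f^{n}(x)\to x^*$ does not obviously upgrade to uniform convergence on $[c,d]$. I would close the gap by invoking the (standard for one-dimensional maps) fact that a fixed point attracting every orbit on an interval is automatically Lyapunov stable, which together with attraction yields uniform attraction on compacta and hence the desired collapse.

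For (b) I would reduce global attractivity to the nonexistence of $2$-cycles by means of Coppel's criterion recalled above: it suffices to show that $g=f^{2}$ has no fixed point other than $x^*$. (Coppel's theorem is stated for a compact interval; since oscillation keeps orbits bounded and, in the applications, the permanence of Theorem~\ref{thm:permanence} confines every orbit to a compact absorbing interval, the reduction is legitimate there.) Now $g'(x^*)=[f'(x^*)]^{2}\in(0,1]$ by (ii), so $x^*$ is non-repelling for $g$ and $g'>0$ near $x^*$; moreover the composition formula $Sg(x)=Sf(f(x))\,[f'(x)]^{2}+Sf(x)$ shows, via (i) and the oscillation (which sends $x\in(c,d)$ on one side of $x^*$ to $f(x)$ on the other side, still inside $(c,d)$ for $x$ close to $x^*$), that $Sg<0$ on each side of $x^*$. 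I would then apply the negative-Schwarzian minimum principle (a $C^{3}$ map with $Sg<0$ and $g'\ne0$ admits no positive local minimum of $g'$) to the function $\phi:=g-\operatorname{id}$: since $\phi(x^*)=0$ and $\phi'(x^*)=g'(x^*)-1\le0$, the minimum principle prevents $\phi$ from vanishing again on $(x^*,d]$, forcing $g(x)<x$ there, and symmetrically $g(x)>x$ on $[c,x^*)$. Hence $x^*$ is the only fixed point of $g$, there are no $2$-cycles, and Coppel's theorem gives global attraction.

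I expect the delicate point in (b) to be the domain bookkeeping for the Schwarzian argument: $Sf<0$ is assumed only on $(c,d)$, and the minimum principle needs $g'\neq0$ (absence of critical points) throughout the region under consideration. I would handle this either by confirming that the orbits and any candidate $2$-cycle remain inside $(c,d)$ where $f'\neq0$ — which is exactly what permanence estimates of the type \eqref{perm} provide once transferred to the auxiliary map — or, failing that, by falling back on Singer's theorem, which guarantees that each neutral or attracting cycle of a negative-Schwarzian map attracts a critical point or an endpoint, and thereby bounds the number of attracting cycles and rules out a stable $2$-cycle competing with $x^*$.
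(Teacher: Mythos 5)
The paper does not prove this lemma: it is imported verbatim (``for convenience of the reader'') from Lemma~2.5 and Corollary~2.9 of \cite{ML}, so there is no in-paper proof to compare your argument against. Judged on its own terms, your sketch runs along the same circle of ideas that underlies \cite{ML} --- Coppel's convergence criterion combined with a Singer-type negative-Schwarzian exclusion of $2$-cycles --- and your preliminary observations (the oscillation hypothesis makes $f^{2}$ preserve each of $[a,x^*]$ and $[x^*,b]$, and forces any $2$-cycle to straddle $x^*$) are correct and are the right starting point.

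There are, however, two places where what you wrote is an outline rather than a proof. For (a), everything rests on the assertion that a fixed point of a continuous interval map attracting every orbit is automatically Lyapunov stable; this is not an innocuous ``standard fact'' (for general continuous interval maps it requires an argument, and here it carries essentially the whole content of the claim). Under the present hypotheses it can be justified, but a more direct route avoids stability altogether: if $x^*$ attracts every orbit, then $g:=f^{2}$ has no fixed point in $[a,x^*)$ (such a point would generate a $2$-cycle of $f$), and since $g$ maps $[a,x^*)$ into itself, $g(x)-x$ is continuous, nonvanishing, hence of constant sign there, necessarily $g(x)>x$; on the other hand $f([c,d])\supseteq[c,d]$ yields $v<x^*<u$ in $[c,d]$ with $f(v)\ge d$ and $f(u)\le c$, whence $g([v,x^*])\supseteq f([x^*,u])\supseteq[c,x^*]$, so some $w\in[v,x^*)$ satisfies $g(w)= c\le v\le w$ --- a contradiction. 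For (b), the two difficulties you flag at the end are genuine and remain open in your write-up: the minimum principle for $g=f^{2}$ needs $g'\neq 0$ on the interval joining the putative $2$-cycle to $x^*$ (so critical points of $f$ must be excluded or handled via Singer's theorem, which in turn requires negative Schwarzian on the whole relevant range, not just near $x^*$), and $Sf<0$ is only assumed on $(c,d)$, so one must also show that any $2$-cycle together with its $f$-image lies in $(c,d)$. Appealing to the permanence estimate \eqref{perm} cannot close this gap, because the lemma is a statement about an abstract map $f$ and is used as an input to Theorems \ref{thmGA_K1} and \ref{thmGA_K2}, not the other way around. As it stands, the proposal is a plausible outline of the argument in \cite{ML}, with its two critical steps asserted rather than established.
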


We are now in the position to state the main results of this section on the global attractivity of $K$. The proofs are inspired in the techniques of El-Morshedy and Ruiz-Herrera \cite{MR18}, however  we need to carefully adjust the major arguments in \cite{MR18} to the present situation. 

Recall that it is not possible to write explicitly the  equilibrium $K$ if $m>1$, see \eqref{equilK}.  This alone justifies considering separately, for the sake of simplicity of exposition, the case of a single pair of delays, prior to the general situation \eqref{eq:nicholson}.




\subsection{The case of one pair of delays}

\begin{thm}\label{thmGA_K1}
Assume $p>\de$,   denote $\dps \zeta_M := \limsup_{t \to \infty} \int_{t-\sigma(t)}^{t} \beta(s)\, \dd s$ as in \eqref{cond:las} (with $m=1$),
and further assume 
\begin{itemize}
\item[(h1)] $\dps (e^{ \delta\zeta_M} -1)\log \frac{p}{\delta}  \le 1.$
 \end{itemize}  
%
%
Then the equilibrium $K$   of \eqref{eq:nicholson_1pair} is globally attractive (in $C_0^+$).
\end{thm}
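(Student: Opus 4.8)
The plan is to follow the strategy of El-Morshedy and Ruiz-Herrera by reducing the asymptotic behaviour of \eqref{eq:nicholson_1pair} to the global attractivity of a scalar difference equation, with the permanence from Theorem \ref{thm:permanence} providing the confining bounds. Set $\ell=\liminf_{t\to\infty}x(t)$ and $L=\limsup_{t\to\infty}x(t)$; by Theorem \ref{thm:permanence}, $0<\ell\le L<\infty$. A first, elementary step evaluates the equation along sequences $t_n\to\infty$ realizing the limsup (resp. liminf) with $x'(t_n)\to0$: using $x(t_n-\tau(t_n))\le L+o(1)$ together with $x(t_n-\sigma(t_n))\ge \ell-o(1)$, the balance $\delta x(t_n)=p\,x(t_n-\tau(t_n))e^{-a x(t_n-\sigma(t_n))}$ forces $\delta\le p e^{-a\ell}$, hence $\ell\le K$; the symmetric argument at a trough gives $L\ge K$. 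Thus $\ell\le K\le L$, and it remains only to exclude $\ell<L$.

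The core of the proof is the estimate controlling the decreasing-monotonicity delay. Since the nonlinear term in \eqref{eq:nicholson_1pair} is nonnegative, $x'(s)\ge-\delta\beta(s)x(s)$, so $(\log x)'(s)\ge-\delta\beta(s)$ and therefore
\[
 x(t-\sigma(t))\le x(t)\exp\Big(\delta\!\int_{t-\sigma(t)}^{t}\beta(s)\,ds\Big)\le x(t)\,e^{\delta\zeta_M}\qquad(t\ \text{large}),
\]
which is precisely where the factor $e^{\delta\zeta_M}$ enters. Feeding this bound into the balance relations at the extremal sequences, and adapting the argument of \cite{MR18} to the time-varying delays and to the nonautonomous factor $\beta(t)$, I would derive the two coupled inequalities $L\le f(\ell)$ and $\ell\ge f(L)$, where $f$ is the decreasing, $C^\infty$ auxiliary map
\[
 f(x)=K\exp\big[(e^{\delta\zeta_M}-1)\,a\,(K-x)\big].
\]
This map has $K$ as its unique fixed point, satisfies $(f(x)-K)(x-K)<0$ for $x\ne K$, and $f'(K)=-(e^{\delta\zeta_M}-1)aK=-(e^{\delta\zeta_M}-1)\log(p/\delta)$, so that hypothesis (h1) is exactly the condition $-1\le f'(K)<0$.

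With $f$ in hand I would apply Lemma \ref{lemML}. Being an exponential map, $f$ has constant negative Schwarzian derivative $Sf\equiv-\tfrac12\big((e^{\delta\zeta_M}-1)a\big)^2<0$, so hypotheses (i)--(ii) of part (b) hold on any interval around $K$ once (h1) supplies $-1\le f'(K)<0$; hence $K$ is a global attractor of $x_{n+1}=f(x_n)$. If $\ell<L$, then since $f$ is decreasing the inequalities $f(\ell)\ge L$ and $f(L)\le\ell$ give $f([\ell,L])=[f(L),f(\ell)]\supset[\ell,L]$, contradicting part (a) of Lemma \ref{lemML}. Therefore $\ell=L$, and with $\ell\le K\le L$ this yields $\ell=L=K$, i.e. every positive solution converges to $K$.

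The main obstacle is the middle step: rigorously producing the two extremal inequalities $L\le f(\ell)$ and $\ell\ge f(L)$. This requires combining the oscillation estimate above with a careful bookkeeping of the mixed-monotone nonlinearity along peak/trough sequences --- in particular relating $x(t-\tau(t))$ and $x(t-\sigma(t))$ to $\ell$ and $L$ simultaneously without losing the sharp $e^{\delta\zeta_M}$ dependence --- and verifying uniform validity as $t\to\infty$ despite the time-varying delays $\tau(t),\sigma(t)$ and the nonautonomous coefficient $\beta(t)$, for which the explicit permanence bounds \eqref{perm} are the essential ingredient.
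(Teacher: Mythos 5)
Your overall architecture is the same as the paper's: permanence, fluctuation sequences realizing $l$ and $L$, reduction to a one-dimensional difference equation, and the El-Morshedy--L\'opez lemma (Lemma \ref{lemML}) via a negative Schwarzian derivative together with $-1\le f'(K)<0$, which you correctly identify with (h1). However, there is a genuine gap exactly where you flag it: the coupled inequalities $L\le f(l)$ and $l\ge f(L)$ are the entire technical content of the proof, and the one quantitative estimate you offer cannot produce them. The differential inequality $x'(s)\ge -\delta\beta(s)x(s)$ discards the production term, so it yields only the one-sided bound $x(t-\sigma(t))\le x(t)e^{\delta\zeta_M}$; fed into the balance at a trough this gives the uniform lower bound $l\ge Ke^{-\delta\zeta_M}$, but it contains no information coupling $L$ to $l$ (and at a peak it bounds $L_\sigma$ from \emph{above}, which is the useless direction there). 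The coupling must come from the production term itself, evaluated over the whole interval $[t-\sigma(t),t]$.

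What the paper actually does at this step is multiply the equation by the integrating factor $e^{\int_{t_0}^{t}\delta\beta(v)\,\dd v}$ and integrate over $[t_n-\sigma(t_n),t_n]$, obtaining
\begin{equation*}
x(t_n)= x(t_n-\sigma(t_n))\,e^{-\int_{t_n-\sigma(t_n)}^{t_n}\delta\beta(v)\dd v}+\int_{t_n-\sigma(t_n)}^{t_n}p\,\beta(s)\,x(s-\tau(s))\,e^{-ax(s-\sigma(s))}e^{\int_{t_n}^{s}\delta\beta(v)\dd v}\dd s,
\end{equation*}
then bounding $x(t_n-\sigma(t_n))$ by $L_\sigma\le K$ (from the stationarity relations), $x(s-\tau(s))$ by $L$, and applying the mean value theorem for integrals to the factor $e^{-ax(s-\sigma(s))}$ to produce intermediate values $\tilde l,\tilde L\in[l,L]$. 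This yields $L\le e^{-\delta\zeta}K+\frac{p}{\delta}Le^{-a\tilde l}(1-e^{-\delta\zeta})$ and its counterpart for $l$, hence the M\"obius-type dominating map $h(x)=e^{-\delta\zeta}K\big(1-(1-e^{-\delta\zeta})\tfrac{p}{\delta}e^{-ax}\big)^{-1}$ rather than your exponential map. Two consequences of this difference matter. First, the well-definedness of $h$ (positivity of its denominator on $[\theta,\infty)$ with $\theta=Ke^{-\delta\zeta}$) is itself a nontrivial consequence of (h1) (Step 3 of the paper); your globally defined exponential map hides this issue, and you have not shown that your $f$ actually dominates the solution dynamics, i.e.\ that $L\le f(l)$ and $l\ge f(L)$ hold for \emph{your} $f$. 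Second, the paper must also rule out degenerate cases ($\zeta=0$, $\tilde l=K$, etc.) to get the strict inequalities $l<K<L$ needed before invoking Lemma \ref{lemML}, and must handle the passage from limits $\zeta$ along subsequences to $\zeta_M$ by a monotonicity argument in $\zeta$ (Step 4); neither appears in your sketch. As written, the proposal is a correct plan with the decisive estimate missing.
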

\begin{proof} Consider any solution of \eqref{eq:nicholson_1pair} with initial condition in $C_0^+$.
We need to prove that there exists $ \lim_{t\to \infty} x(t)=K $.
By the permanence in Theorem \ref{thm:permanence},  there exists
\begin{equation}\label{lL}
    l:= \liminf_{t \to \infty }x(t) \q {\rm and}\q   \limsup_{t \to \infty }x(t)=: L
\end{equation}
with $0<l\le L$.
\smal

{\it Case 1}: 
 Suppose that $l=L$, i.e., there exists $ \lim_{t\to \infty} x(t)=C>0$.
 \smal

From \eqref{eq:nicholson_1pair}, we may write $x'(t)=\be(t)g(x_t)$, where
$\lim_{t\to\infty}g(x_t)=\de C\big (f(C)-1\big )$
and $$f(x):=\frac{p}{\delta}e^{-ax},\q x\ge 0.$$
Let $\eta=f(C)-1$. If $0<C<K$, then for $t$ large we have $x'(t)\ge \be^-\de C\frac{\eta}{2}>0$, thus $x(t)\to \infty$ as $t\to \infty$, which is a contradiction. Similarly, $\eta<0$ if $C>K$, in which case  $x(t)\to -\infty$ as $t\to \infty$, which is not possible. Hence $C=K$.

%

\med

{\it Case 2}: Suppose that there is a solution   $x(t)$  with $l<L$ in \eqref{lL}.

\med

We preclude this situation  in several steps.
To simplify the exposition, we first suppose that there exists
\begin{equation}\label{zeta} \zeta := \lim_{t\to \infty } \int_{t-\sigma(t)}^t  \beta (s)\, \dd s,
 \end{equation}
leaving  the general situation, where $\zeta$ is replaced by $\zeta_M$,  for the last step of the proof.
 In this case, assumption (h1) reads as
 \begin{equation}\label{eq:condition3}
 (e^{\delta\zeta} -1)\log \frac{p}{\delta} \le 1,
\end{equation}

{\it Step 1.} As in the proof of \cite[Theorem 3.1]{MR18},  consider sequences $(t_n)$, $(s_n)$ such that $t_n, s_n \to \infty $ and $x'(t_n)= x'(s_n)=0$, $ x(t_n)\to L, x(s_n)\to l.$
Taking  subsequences if needed, there exist $l_\tau$, $l_\sigma, L_\tau$, $L_\sigma$ $\in[l, L]$ such that
\begin{equation}\label{limits_h,g}
    x(t_n-\tau(t_n))\to L_\tau, \q  x(t_n-\sigma(t_n))\to L_\sigma 
    , \q  x(s_n-\tau(s_n))\to l_\tau, \q x(s_n-\sigma(s_n))\to l_\sigma
\end{equation}
and $\lim_n \beta(t_n)>0,\lim_n \beta(s_n)>0$. From \eqref{eq:nicholson_1pair}, it follows that
\begin{equation*}
\begin{split}
    0&=\beta(t_n) \left(p x(t_n-\tau(t_n))e^{-ax(t_n-\sigma(t_n))}-\delta x(t_n)\right),\\
    0&=\beta(s_n) \left(p x(s_n-\tau(s_n))e^{-ax(s_n-\sigma(s_n))}-\delta x(s_n)\right).
    \end{split}
\end{equation*}
Using the limits in \eqref{limits_h,g}   leads to
  $ 0=p L_\tau e^{-a L_\sigma}-\delta L,\  0=p l_\tau e^{-a l_\sigma}-\delta l$,
thus
\begin{equation}
    L=\frac{p}{\delta}L_\tau e^{-a L_\sigma} \q {\rm and}\q   l=\frac{p}{\delta}l_\tau e^{-al_\sigma}.
\end{equation}
As $L_\tau \le L$ and $l_\tau \ge l$, we have  $L\le \frac{p}{\delta}L e^{-a L_\sigma}$ and $l \ge \frac{p}{\delta}l e^{-al_\sigma}$. Furthermore,
\begin{equation*}
    (\frac{p}{\delta} xe^{-ax} -x)(x-K) <0, \q x>0,x\ne K,
\end{equation*}
which combined with the above inequalities  yields 
\begin{equation}\label{limits_g}
L_\sigma \le K \le l_\sigma.
\end{equation}
By multiplying the equation by  $\mu(t)=e^{ \int_{t_0}^t \de \be(v)\, \dd v}$ and integrating over $[t-\sigma(t), t]$,  one obtains 
\begin{equation*}
    x(t)= x(t-\sigma(t)) e^{-\int_{t-\sigma(t)}^t \delta \beta(v)\, \dd v}+ \int_{t-\sigma(t)}^t p \beta(s) x(s-\tau(s))e^{-ax(s-\sigma(s))}e^{\int_t^s \delta \beta(v)\, \dd v} \, \dd s.
\end{equation*}
Let $\varepsilon>0$. There is  $n_0 \in \mathbb{N}$ such that $x(t)\in[l-\varepsilon, L+\varepsilon]$ for $ t\ge t_n-2\tau$ and $x(t_n-\sigma(t_n))\le L_\sigma+\vare$, for all  $n\ge n_0$. From the previous equation, it follows that
\begin{align*}
    x(t_n)&= x(t_n-\sigma(t_n)) e^{-\int_{t_n-\sigma(t_n)}^{t_n} \delta \beta(v)\, \dd v}+ \int_{t_n-\sigma(t_n)}^{t_n} p \beta(s) x(s-\tau(s))e^{-ax(s-\sigma(s))}e^{\int_{t_n}^s \delta \beta(v)\, \dd v} \dd s \\
    &\le (L_\sigma+\varepsilon) e^{-\int_{t_n-\sigma(t_n)}^{t_n} \delta \beta(v)\, \dd v}+ \int_{t_n-\sigma(t_n)}^{t_n} p \beta(s) (L+\varepsilon)e^{-ax(s-\sigma(s))}e^{\int_{t_n}^s \delta \beta(v)\, \dd v} \dd s.
\end{align*}
Using the mean value theorem for integrals, there exists $\tilde {l}_n(\varepsilon) \in [l-\varepsilon, L+\varepsilon]$ such that
$$x(t_n)\le (L_\sigma+\varepsilon) e^{-\int_{t_n-\sigma(t_n)}^{t_n} \delta \beta(v)\, \dd v}+ \frac{p}{\delta}(L+\varepsilon)e^{-a  \tilde l_n(\varepsilon)}\left(1-e^{-\int_{t_n-\sigma(t_n)}^{t_n} \delta \beta(v)\, \dd v}\right).$$
By taking subsequences if needed, we may suppose that $ \tilde l_n(\varepsilon) \to  \tilde l(\varepsilon)$ as $n\to\infty$. Taking  limits and using \eqref{zeta},  we obtain
\begin{equation*}
  L\le e^{-\de \zeta}(L_\sigma+\varepsilon)+\frac{p}{\delta}(L+\varepsilon)e^{-a  \tilde l(\varepsilon)}(1-e^{-\de \zeta}) . 
\end{equation*}
Once again, we may suppose that $ \tilde l(\varepsilon)\to  \tilde l$ as $\varepsilon\to 0$ and by passing to the limit it follows that
\begin{equation}\label{eq:ineqL}
    L\le e^{-\de \zeta}L_\sigma+\frac{p}{\delta}L e^{-a  \tilde l}(1-e^{-\de \zeta}) \le e^{-\de \zeta}K+\frac{p}{\delta}L e^{-a  \tilde l}(1-e^{-\de \zeta})
\end{equation}
Analogously, by using the sequence $(s_n)$, one also shows that
\begin{equation}\label{eq:ineql}
    l\ge e^{-\de \zeta}l_\sigma+\frac{p}{\delta}  l e^{-a \tilde L}(1-e^{-\de \zeta})\ge e^{-\de \zeta}K+\frac{p}{\delta}  l e^{-a \tilde L}(1-e^{-\de \zeta})
\end{equation}
where $\tilde L, \tilde l \in [l, L]$.

\smal
{\it Step 2.} We claim that $\zeta >0$ and $ l<K< L.$

We first observe that, if $\zeta=0$,   \eqref{limits_g},\eqref{eq:ineqL} and \eqref{eq:ineql} imply 
$$L\le L_\sigma\le K\le l_\sigma\le l,$$
then $l=L$, which contradicts our assumption.
Next, we show  that $\tilde l< K<\tilde L.$

If $\tilde l>K$, then $\frac{p}{\de}e^{-a\tilde l}<1$ and from \eqref{eq:ineqL} $L<e^{-\de \zeta}K+L(1-e^{-\de \zeta})\le L$, which is not possible. Hence $l\le \tilde l\le K$. Analogously,  from \eqref{eq:ineql} we deduce that $K\le \tilde L\le L.$

If $\tilde l=K$, again from \eqref{eq:ineqL} we derive
$$L\le e^{-\de \zeta}K+L (1-e^{-\de \zeta})\le L,$$
hence   $L=\tilde L=K$. Inserting this into \eqref{eq:ineql} yields
$ l\ge e^{-\de \zeta}K+l(1-e^{-\de \zeta})\ge l$, thus $l=K$, which  contradicts the assumption that $l<L$.
This shows that $\tilde l<K$. Similarly, one shows that  $\tilde L>K$. 
Thus, $$ l\le \tilde l<K< \tilde L\le L.$$

{\it Step 3.} 
From Steps 1 and 2, note that   $l> e^{-\de \zeta}K =: \theta$. Now, let  $f(x):=\frac{p}{\delta}e^{-ax}$ and $\mu:= 1-e^{-\de \zeta}=1-\frac{\theta}{K}$. Next, we prove that (h1) implies
\begin{equation}\label{eq:condition1}
    f(x)< \frac{1}{\mu}\, , \qq x\ge \theta. 
\end{equation} Since $f$ is decreasing, \eqref{eq:condition1} holds if $f(\th)<\mu^{-1}$, which is equivalent  to 
\begin{equation}\label{teta1}
    \th > \frac{1}{a}\log \frac{p}{\delta}+\frac{1}{a}\log (1-e^{-\de \zeta})=: \theta_1.
\end{equation}
 Inserting the identity $\theta=\frac{1}{a}\log \frac{p}{\delta}e^{-\de \zeta}$ in \eqref{teta1}, we have $\th>\th_1$ if and only if
\begin{equation*}
(1-e^{-\de \zeta})    \log\frac{p}{\delta}+\log(1-e^{-\de \zeta})<0.
\end{equation*}
Since
$ x+\log(1-x)<0 $ for all $x\in(0, 1)$, from \eqref{eq:condition3} we conclude that
  \begin{equation*}
 (1-e^{-\de \zeta})    \log\frac{p}{\delta}+\log(1-e^{-\de \zeta})\le   e^{-\de \zeta}+\log(1-e^{-\de \zeta})<0,
\end{equation*}
thus \eqref{eq:condition1} holds. 
Therefore, 
the function 
\begin{equation}\label{fc:h}
    h(x):=\frac{\th}{1-\mu f(x)} \end{equation}
is well-defined on $I:=[ \th,\infty).$ Note that $h(K)=K$ and $(h(x)-K)(x-K)<0$ for $x\ge \th, x\ne K$.
From  \eqref{eq:ineqL} and \eqref{eq:ineql}, it follows that
\begin{subequations} \label{eq:L_l_ineq}
    \begin{equation}
        L\le h( \tilde l)\le h(l)
    \end{equation}
    \begin{equation}
        l \ge h(\tilde L)\ge h(L).
    \end{equation}
\end{subequations}
We have $f \in C^3$, $f'(x)<0$ and $Sf(x)=-\frac{a^2}{2}<0$ for $x\ge \th$. Write $h(x)=h_1(f(x))$ where $h_1(x)=\frac{\th}{1-\mu x}$, and note that $Sh_1(x)\equiv 0$.  From the formula
$Sh(x)=Sh_1(f(x)) (f'(x))^2+Sf(x)$ (see e.g.~\cite{MR18}), we have $Sh(x)<0$ for all $x\ge \th.$ On the other hand,
$h'(K)= -aK(e^{\delta\zeta}-1)$, and therefore $-1\le h'(K)<0$ is equivalent to
\begin{equation*}\label{eq:condition2}
  0<  \log\frac{p}{\delta}\le \frac{1}{e^{\delta\zeta}-1}.
\end{equation*}
which, with $\zeta>0$, is our hypothesis (h1).
Thus, from Lemma \ref{lemML}(b),  
we derive that  $K$ is a global attractor of the difference equation\
    \begin{equation}
    x_{n+1}=h(x_n).
    \end{equation}
with initial conditions $x_0\ge \theta$.
From  Lemma \ref{lemML}(a), this implies that there are no points $c, d \in [\theta, \infty), c< d,$ such that $h([c, d]\supset [c, d]$.  This however contradicts  \eqref{eq:L_l_ineq}, since clearly $h([l, L])=[h(L),h(l)]\supset [l, L]$ with $l<K<L$. 
This shows that there are no solutions  with $l<L$  when the above  limit $\zeta$ in \eqref{zeta}   exists.

\med

{\it Step 4}.
The assumption that the limit in \eqref{zeta} exists is now removed.
Bearing this in mind, the proof follows as above up to conditions \eqref{eq:ineqL} and \eqref{eq:ineql}, 
taking however subsequences of $(t_n)$ and $(s_n)$ (if needed) such that
$$ \lim \int_{t_n-\sigma(t_n)}^{t_n}  \beta(s)\, \dd s = \zeta_1,\q
\lim \int_{t_n-\sigma(t_n)}^{t_n} \ \beta(s)\, \dd s = \zeta_2$$
for some $\zeta_1, \zeta_2 \in [\zeta_m, \zeta_M]$, with $\zeta_M$ as in \eqref{cond:las} and $\zeta_m := \liminf_{t \to \infty} \int_{t-\sigma(t)}^{t}  \beta(s)\, \dd s$. In this way, for a solution $x(t)$ with $l<L$ in \eqref{lL}, the inequalities \eqref{eq:ineqL} and \eqref{eq:ineql} become
\begin{subequations} \label{eq:ineqlL_nolimit}
    \begin{equation}
        L\le e^{-\delta\zeta_1}K+\frac{p}{\delta}L e^{-a  \tilde l}(1-e^{-\delta\zeta_1})
    \end{equation}
    \begin{equation}
        l\ge e^{-\delta\zeta_2}K+\frac{p}{\delta}l e^{-a \tilde L}(1-e^{-\delta\zeta_2}).
    \end{equation}
\end{subequations}

As  in Step 2, one sees that it is not possible to have $\zeta_1=\zeta_2=0$. If $\zeta_1>0$ and $\zeta_2=0$, proceeding as in Step 2   would lead to $l=l_\sigma=L_\sigma=K$ and $\tilde l\le K$, thus also $L=K$, which contradicts the assumption  $l<L$. Thus, we  deduce that $\zeta_i>0\, (i=1,2)$ and, arguing as above, that
$l\le \tilde l<K<\tilde L\le L$.

We now define
\begin{equation*}
    h_i(x)=\frac{e^{-\delta\zeta_i}K}{1-\mu_i f(x)}, \qq i=1,2,
\end{equation*}
for $x>f^{-1}(1/\mu_i)$,
where $\mu_i=1-e^{-\delta\zeta_i}$. 
Let
\begin{equation*}
    j(x, \zeta) := \frac{1-\mu}{1-\mu f(x)} \q{\rm where}\q \mu=\mu(\zeta)=1-e^{-\de \zeta},
\end{equation*}
and note that 
$
(x-K)   \frac{\p j}{\p\zeta} (x, \zeta)  <0$  for all $ x\ne K$.
Thus $\zeta \mapsto j(x, \zeta)$ is increasing for $x<K$ and decreasing for $x>K$, hence, for all $x$ such that $f(x)\mu(\zeta_M)<1$, it follows   that
\begin{subequations}\label{h1,h2}
    \begin{equation}
        h_1(x)=K j(x, \zeta_1) \le K j(x, \zeta_M) \, ,\q {\rm if}\q  x<K,
    \end{equation}
    \begin{equation}
        h_2(x)=K j(x, \zeta_2) \ge K j(x, \zeta_M)\, ,\q {\rm if}\q  x>K.
    \end{equation}
\end{subequations}
Define
\begin{equation}\label{h(x)}
    h(x)=\frac{e^{-\de \zeta_M}K}{1-(1-e^{-\de \zeta_M})f(x)}=K j(x, \zeta_M).
\end{equation}
From \eqref{eq:ineqlL_nolimit},
\begin{subequations}
    \begin{equation*}
        L\le h_1( \tilde l)\le h(l),
    \end{equation*}
    \begin{equation*}
        l\ge h_2(\tilde L) \ge h(L).
    \end{equation*}
\end{subequations}
From this point onwards, we can resume the proof of Step 3, and conclude that there are no solutions $x(t)$ with $l<L$ in \eqref{lL}.
The proof that $K$ is a global attractor is complete.
\end{proof}


\begin{rmk}\label{rmk4.2} {\rm For \eqref{Nich_MR} with  $p>\de>0$ and constant delays $\tau\ge\sigma\ge 0$, it was shown in \cite{MR18} that $K$ is a global attractor of all positive solutions if $(e^{\de \sigma}-1)\log(p/\de)\le 1$. This criterion  is a very particular case of the result established in Theorem \ref{thmGA_K1}.}
\end{rmk}

Combining  Corollary \ref{corLAS} and Theorem \ref{thmGA_K1}, we obtain:

\begin{thm}\label{thmGAS_K1}  
With the above notations,  let $p>\de$ and assume that one of the following conditions holds:

(i)  $\log(p/\de)\le \frac{2\de \zeta_M}{e^{\de \zeta_M}-\de \zeta_M-1}$ and $\de \zeta_M(2+\log(p/\de))<1$;

(ii)  $\log(p/\de)> \frac{2\de \zeta_M}{e^{\de \zeta_M}-\de \zeta_M-1}$ and $(e^{ \delta\zeta_M} -1)\log \frac{p}{\delta}  \le 1.$\\
Then the equilibrium $K$   of \eqref{eq:nicholson_1pair} is globally asymptotically stable (in $C_0^+$).
\end{thm}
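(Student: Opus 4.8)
The plan is to obtain global asymptotic stability simply by \emph{combining} the two criteria already available for \eqref{eq:nicholson_1pair}: Corollary \ref{corLAS} yields local exponential stability of $K$ (hence Lyapunov stability) whenever $\de\zeta_M\big(2+\log(p/\de)\big)<1$, while Theorem \ref{thmGA_K1} yields global attractivity whenever $\big(e^{\de\zeta_M}-1\big)\log(p/\de)\le 1$. By definition $K$ is globally asymptotically stable once it is both stable and globally attractive, so it suffices to show that each of the hypotheses (i) and (ii) forces \emph{both} of the quantities
\begin{equation*}
q_1:=\de\zeta_M\big(2+\log(p/\de)\big)\qquad\text{and}\qquad q_2:=\big(e^{\de\zeta_M}-1\big)\log(p/\de)
\end{equation*}
to satisfy $q_1<1$ and $q_2\le 1$; the conclusion then follows at once from the two quoted results.

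The key elementary observation is that $q_1$ and $q_2$ are ordered according to the position of $\log(p/\de)$ relative to the threshold appearing in the statement. Writing $s:=\de\zeta_M\ge 0$ and $L:=\log(p/\de)>0$, we have
\begin{equation*}
q_1-q_2=s(2+L)-L\big(e^{s}-1\big)=2s-L\big(e^{s}-1-s\big).
\end{equation*}
Since $e^{s}-1-s>0$ for $s>0$, the right-hand side is an affine, strictly decreasing function of $L$ with unique zero at $L=T(s):=\frac{2s}{e^{s}-s-1}$, which is exactly $\frac{2\de\zeta_M}{e^{\de\zeta_M}-\de\zeta_M-1}$. Hence $L\le T(s)$ gives $q_1\ge q_2$, and $L>T(s)$ gives $q_1<q_2$. (The degenerate case $\zeta_M=0$ is trivial, as then $q_1=q_2=0$ and hypothesis (i) holds.)

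It remains to combine this ordering with whichever of $q_1,q_2$ is supplied by each hypothesis. Under (i) we are given $q_1<1$ together with $L\le T(s)$, whence $q_2\le q_1<1$, so both $q_1<1$ and $q_2\le1$ hold. Under (ii) we are given $q_2\le1$ together with $L>T(s)$, whence $q_1<q_2\le1$, so again both inequalities hold. In either case $K$ is simultaneously locally exponentially stable by Corollary \ref{corLAS} and globally attractive by Theorem \ref{thmGA_K1}, hence globally asymptotically stable. I do not anticipate any genuine obstacle: the argument reduces to a one-variable comparison of two explicit expressions, and the only point that deserves care is the bookkeeping of strict versus non-strict inequalities—specifically, recovering the \emph{strict} condition $q_1<1$ required by Corollary \ref{corLAS} in case (ii), which is indeed guaranteed because there $q_1<q_2$ holds strictly (as $L>T(s)$ is strict).
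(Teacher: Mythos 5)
Your proposal is correct and follows essentially the same route as the paper: both reduce the theorem to comparing the quantities $q_1=\de\zeta_M(2+\log(p/\de))$ and $q_2=(e^{\de\zeta_M}-1)\log(p/\de)$, observing that their ordering flips exactly at $\log(p/\de)=\frac{2\de\zeta_M}{e^{\de\zeta_M}-\de\zeta_M-1}$, so that each hypothesis delivers both the local stability condition of Corollary \ref{corLAS} and the global attractivity condition (h1) of Theorem \ref{thmGA_K1}. Your write-up is in fact slightly more careful than the paper's, since you explicitly track the strict versus non-strict inequalities and the degenerate case $\zeta_M=0$.
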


\begin{proof} Define $c=\frac{2\de \zeta_M}{e^{\de \zeta_M}-\de \zeta_M-1}$, $g_1(x)=(e^x-1)\log(p/\de), g_2(x)=(2+\log(p/\de))x$. Since $g_1(\de \zeta_M)\le g_2(\de \zeta_M)$ if and only if  $\log(p/\de)\le c$, the latter condition and \eqref{cond:las} imply  (h1), and thus $K$ is stable and globally attractive;  if $\log(p/\de)>c$ and  (h1) are satisfied, then \eqref{cond:las} holds as well.
\end{proof}

\begin{rmk}\label{rmk4.3} {\rm The  global attractivity  in Theorem  \ref{thmGA_K1} suggests that  the criterion for local stability in  Corollary \ref{corLAS} is not sharp.
In fact, depending on the size of the coefficients in \eqref{eq:nicholson_lin},   sharper results for the local asymptotic stability of $K$ are occasionally obtained from the criteria in  \cite{bb2019}.}
\end{rmk}

\subsection{The case of $m$ pairs of delays}

Next, we address the global attractivity of the positive equilibrium $K$ of \eqref{eq:nicholson}.

\begin{thm}\label{thmGA_K2} Consider \eqref{eq:nicholson} and define $\zeta_M$ as in \eqref{zeta_M}.
Assume $p>\de$ and that:
\begin{itemize}
\item[(H1)] $\dps\frac{a^+}{a^-}< \frac{3}{2} $;
\item[(H2)] $a^+K(e^{\delta\zeta_M} -1)\le 1$.
\end {itemize}
Then the equilibrium $K$   of \eqref{eq:nicholson} is globally attractive.
\end{thm}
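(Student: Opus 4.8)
The plan is to transpose the architecture of the proof of Theorem~\ref{thmGA_K1} to the $m$-pair setting, replacing the scalar decreasing map $x\mapsto \frac{p}{\de}e^{-ax}$ by
$$f(x):=\frac1\de\sum_{j=1}^m p_j e^{-a_j x},\qq x\ge 0,$$
which is strictly decreasing with $f(K)=1$ by \eqref{equilK}. By the permanence in Theorem~\ref{thm:permanence}, $l:=\liminf_{t\to\infty}x(t)$ and $L:=\limsup_{t\to\infty}x(t)$ exist with $0<l\le L$. If $l=L=C$, the sign analysis of Case~1 in Theorem~\ref{thmGA_K1} carries over verbatim: $\sum_j p_j e^{-a_jC}-\de$ is strictly decreasing in $C$ and vanishes only at $C=K$, so $C\ne K$ would force $x'(t)$ to keep a constant sign for large $t$, contradicting $x(t)\to C$; hence $C=K$. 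The whole task is to preclude $l<L$.

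For $l<L$ I would first reproduce Step~1 of Theorem~\ref{thmGA_K1}: pick $t_n,s_n\to\infty$ with $x'(t_n)=x'(s_n)=0$, $x(t_n)\to L$, $x(s_n)\to l$, and (along a subsequence) with all delayed values $x(t_n-\tau_j(t_n))\to L_{\tau,j}$, $x(t_n-\sigma_j(t_n))\to L_{\sigma,j}$, $x(s_n-\tau_j(s_n))\to l_{\tau,j}$, $x(s_n-\sigma_j(s_n))\to l_{\sigma,j}$ in $[l,L]$, and with the weighted delays $\int_{t_n-\sigma_j(t_n)}^{t_n}\be$, $\int_{s_n-\sigma_j(s_n)}^{s_n}\be$ convergent. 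Evaluating \eqref{eq:nicholson} at $t_n,s_n$ and using $L_{\tau,j}\le L$, $l_{\tau,j}\ge l$ yields the \emph{aggregate} relations
$$\sum_j p_j e^{-a_j L_{\sigma,j}}\ge \de,\qq \sum_j p_j e^{-a_j l_{\sigma,j}}\le\de.$$

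Here lies the main obstacle and the genuine departure from the one-pair case: these relations are only aggregate, so no single delayed value is known to lie on the correct side of $K$. The remedy is a pigeonhole step: comparing with $\sum_j p_j e^{-a_jK}=\de$, the first relation forces $L_{\sigma,j_0}\le K$ for some index $j_0$, and the second forces $l_{\sigma,j_1}\ge K$ for some $j_1$; in particular $l\le K\le L$. I would then integrate \eqref{eq:nicholson} against the factor $e^{\int_{t_0}^{t}\de\be}$ over $[t_n-\sigma_{j_0}(t_n),t_n]$ (for the max sequence) and over $[s_n-\sigma_{j_1}(s_n),s_n]$ (for the min sequence). Bounding $x(s-\tau_i(s))\le L+\vare$ and $e^{-a_i x(s-\sigma_i(s))}\le e^{-a_i(l-\vare)}$ collapses the entire sum into $\de f(l-\vare)$, while the boundary terms are controlled by $L_{\sigma,j_0}\le K$ resp. $l_{\sigma,j_1}\ge K$. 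Passing to the limit produces
$$L\le e^{-\de\zeta_{j_0}}K+f(l)\,L\,(1-e^{-\de\zeta_{j_0}}),\qq l\ge e^{-\de\zeta_{j_1}}K+f(L)\,l\,(1-e^{-\de\zeta_{j_1}}),$$
with $\zeta_{j_0},\zeta_{j_1}\in[0,\zeta_M]$ ($\zeta_M$ as in \eqref{zeta_M}). These inequalities also upgrade $l\le K\le L$ to the strict $l<K<L$: e.g. $l=K$ gives $f(l)=1$ and hence $L\le e^{-\de\zeta_{j_0}}K+L(1-e^{-\de\zeta_{j_0}})$, forcing $L\le K=l$, a contradiction; and they give $l\ge e^{-\de\zeta_M}K$, so $l$ lies in the domain below.

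Finally I would recast this as a difference-inequality and invoke Lemma~\ref{lemML}. Put
$$h(x)=\frac{e^{-\de\zeta_M}K}{1-(1-e^{-\de\zeta_M})f(x)}\qquad\text{on } I=[e^{-\de\zeta_M}K,\infty).$$
Using that $\zeta\mapsto \frac{e^{-\de\zeta}K}{1-(1-e^{-\de\zeta})f(x)}$ is monotone in $\zeta$ with the sign of $K-x$ (the $j(x,\zeta)$ computation of Step~4 of Theorem~\ref{thmGA_K1}), the two displayed inequalities give $L\le h(l)$ and $l\ge h(L)$, i.e. $h([l,L])=[h(L),h(l)]\supset[l,L]$ with $l<K<L$. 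It then remains to verify the hypotheses of Lemma~\ref{lemML}(b): $h(K)=K$, $(h(x)-K)(x-K)<0$, $h\in C^3$ decreasing, $-1\le h'(K)<0$, and $Sh<0$ on $I$. Condition (H2) is exactly what makes $h$ well defined on $I$ (namely $f<1/(1-e^{-\de\zeta_M})$ there, via the elementary $x+\log(1-x)<0$) and gives $h'(K)=-K(e^{\de\zeta_M}-1)\,\frac1\de\sum_j a_jp_je^{-a_jK}\ge-1$, after the bound $\frac1\de\sum_j a_jp_je^{-a_jK}\le a^+$. Condition (H1) enters precisely through the Schwarzian: since $f$ is now a \emph{sum} of exponentials, writing $\langle\cdot\rangle$ for the average with weights $w_j=p_ja_je^{-a_jx}$ one computes $Sf=\langle a^2\rangle-\frac32\langle a\rangle^2$, and $a^+/a^-<\frac32$ yields $\langle a^2\rangle\le a^+\langle a\rangle<\frac32\langle a\rangle^2$, hence $Sf<0$; as $h=h_1\circ f$ with $h_1(y)=\frac{e^{-\de\zeta_M}K}{1-(1-e^{-\de\zeta_M})y}$ and $Sh_1\equiv0$, the composition formula gives $Sh=Sf<0$. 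Lemma~\ref{lemML}(b) then makes $K$ a global attractor of $x_{n+1}=h(x_n)$, and Lemma~\ref{lemML}(a) says no interval is expanded by $h$, contradicting $h([l,L])\supset[l,L]$. This rules out $l<L$ and completes the proof.
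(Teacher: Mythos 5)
Your proposal is correct and follows essentially the same route as the paper's proof: permanence plus fluctuation sequences, the integrated inequalities bounding $L$ and $l$ via the boundary values $L_{\sigma,j_0}\le K\le l_{\sigma,j_1}$, the auxiliary map $h(x)=e^{-\de\zeta_M}K/(1-(1-e^{-\de\zeta_M})f(x))$, and Lemma \ref{lemML} with (H1) controlling $Sf<0$ and (H2) giving both $-1\le h'(K)<0$ and the well-definedness of $h$ on $[Ke^{-\de\zeta_M},\infty)$. The only cosmetic differences are that you select the indices $j_0,j_1$ by pigeonhole up front (the paper derives the inequalities for all $j$ and then picks the right ones) and you verify $Sf<0$ by a weighted-average identity rather than the paper's double-sum expansion; both are equivalent.
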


\begin{proof} The main arguments are as in the proof of Theorem \ref{thmGA_K1}, thus some details will be omitted. 

Let $x(t)$ be a solution with initial condition in $C_0^+$. If there exists $C=\lim_{t\to \infty}x(t)$, proceeding as in the above mentioned theorem it follows that $C=K$.

Define $l,L$ as in \eqref{lL}, and suppose that $l<L$.
Since afterwards one may proceed as in Step 4 above and obtain \eqref{h1,h2}, \eqref{h(x)},  without loss of generality   assume already that there exist
$$\zeta_j:= \lim_{t \to \infty} \int_{t-\sigma_j(t)}^{t}  \beta(s)\, \dd s,\qq j=1,\dots,m.$$

{\it  Step 1}.  We argue along the lines of Step 1 of the above proof. Consider sequences $(t_n),(s_n)$ with $t_n,s_n\to \infty, x(t_n)\to L,x(s_n)\to l,  x'(t_n)=x'(s_n)=0$,  such that (by taking  subsequences, if necessary),
  for each $j\in \{1,\dots,m\}$, 
\begin{equation}\label{limitsj_h,g}
\begin{split}
    &x(t_n -\tau_j(t_n ))\to L_{\tau_j}, \  x(t_n -\sigma_j(t_n ))\to L_{\sigma_j} , \ \\
     &x(s_n -\tau_j(s_n ))\to l_{\tau_j}, \ x(s_n -\sigma_j(s_n ))\to l_{\sigma_j}.
    \end{split}
\end{equation}
  As before, from  $x'(t_n)=x'(s_n)=0$ and taking limits, we deduce that 
  \begin{equation*}
    L=\frac{1}{\delta}\sum_j p_j L_{\tau_j} e^{-a_j L_{\sigma_j}} \q {\rm and}\q   l=\frac{1}{\delta}\sum_j p_j l_{\tau_j} e^{-a_j l_{\sigma_j}},
\end{equation*}
and consequently
$$\min_j L_{\sigma_j}\le K\le \max_j l_{\sigma_j},$$
which leads to $l\le K\le L$. Moreover, for each $ j=1,\dots,m$, there are $\tilde l_{ji},\tilde L_{ji}\in [l,L]\ (1\le i\le m)$ for which
\begin{equation}\label{eq:ineqlL2}
\begin{split}
&L\le e^{-\delta\zeta_j}L_{\sigma_j}+\frac{1}{\delta}L \sum_{i=1}^mp_ie^{-a_i  \tilde l_{ji}}(1-e^{-\delta\zeta_j}),\\
&   l\ge e^{-\delta\zeta_j}l_{\sigma_j}+\frac{1}{\delta}  l \sum_{i=1}^mp_ie^{-a_i \tilde L_{ji}}(1-e^{-\delta\zeta_j}).
\end{split}
\end{equation}
For 
$$\min_{1\le i, j\le m}\tilde l_{ji}=:\tilde l,\q\max_{1\le i,j\le m}\tilde L_{ji}=:\tilde L,$$
from the inequalities in  \eqref{eq:ineqlL2} we deduce that
\begin{equation}\label{eq:ineqlL3}
\begin{split}
&L\le e^{-\delta\zeta_j}L_{\sigma_j}+L(1-e^{-\delta\zeta_j})f(\tilde l),\\
&   l\ge e^{-\delta\zeta_j}l_{\sigma_j}+  l (1-e^{-\delta\zeta_j}) f(\tilde L), \qq j=1,\dots ,m,
\end{split}
\end{equation}
where, as before,
$\dps f(x)=\frac{1}{\delta} \sum_{i=1}^mp_ie^{-a_i  x},\ x\ge 0$.

{\it  Step 2}.  We claim that $l<K<L$.

As for the case $m=1$, one easily shows that $\tilde l\le K\le \tilde L$. Next suppose that $\tilde l=K$, so that $f(\tilde l)=1$. From \eqref{eq:ineqlL3}, one  obtains  $L_{\sigma_j}=L$ for all $j$, thus $L=\tilde L=K$.   Inserting $L=K$ in the second inequality of \eqref{eq:ineqlL3} for $i$ such that $l_{\sigma_i}=\max_{1\le j\le m} l_{\sigma_j}$, since $l_{\sigma_i}\ge K$ we arrive to
$$l\ge e^{-\de \zeta_i} l_{\sigma_i}+l(1-e^{-\de \zeta_i})\ge e^{-\de \zeta_i}K+l(1-e^{-\de \zeta_i})\ge l,$$
thus $l=K$, which is not possible. A similar contradiction is obtained if $\tilde L=K$. Therefore,
$l\le \tilde l< K< \tilde L\le L$.

{\it  Step 3}. Define
$$h_j(x)=\frac{e^{-\de \zeta_j}K}{1-(1-e^{-\de \zeta_j})f(x)} \q {\rm for}\q x>\th_1,j=1,\dots,m,$$
and
$$h(x)=\frac{e^{-\de \zeta_M}K}{1-(1-e^{-\de \zeta_M})f(x)} \q {\rm for}\q x>\th_1,$$
where $\th_1>0$ is such that $f(\th_1)=(1-e^{-\de \zeta_M})^{-1}$. 
From \eqref{eq:ineqlL3}, for $j$ such that $ L_{\sigma_j}\le K$, we have
$$L\le h_j(\tilde l)\le h_j(l) \le h(l);$$
and for $j$ such that $ l_{\sigma_j}\ge K$, we obtain
$$ l\ge h_j(\tilde L)\ge h_j(L)\ge h(L). $$
From this point onward, to resume 
the proof of Theorem \ref{thmGA_K1} it suffices to show that our hypotheses imply the Claims (i)-(iii) below.
\smal

 Claim (i):  $Sf(x)<0$ for $x>0$. 

In fact,
by virtue of assumption (H1),
\begin{equation*}
\begin{split}
Sh(x)&=Sf(x)=\frac{\sum_jp_j a_j^3 e^{-a_jx}}{\sum_jp_j a_je^{-a_jx}}-
\frac{3}{2}\left (\frac{\sum_jp_j a_j^2 e^{-a_jx}}{\sum_jp_j a_je^{-a_jx}}\right)^2\\
&= \frac{1}{2(\sum_jp_j a_je^{-a_jx})^2}\left (
2(\sum_jp_j a_j^3 e^{-a_jx})(\sum_jp_j a_je^{-a_jx})-3
(\sum_jp_j a_j^2 e^{-a_jx})^2\right )\\
&= \frac{\sum_jp_j e^{-a_jx}}{2(\sum_jp_j a_je^{-a_jx})^2}\sum_{j,i}p_jp_ie^{-(a_j+a_i)x}a_j^2a_i(2a_j-3a_i)<0.
\end{split}
\end{equation*}

\smal

Claim (ii): $|h'(K)|\le 1$.

The assertion follows by observing that
$h'(K)=K (e^{\de \zeta_M}-1)f'(K),$
thus  (H2) leads to
\begin{equation*}
\begin{split}
|h'(K)|&=  K (e^{\de \zeta_M}-1)\frac{1}{\de}\sum_jp_ja_je^{-a_j K}\\
&\le  (e^{\de \zeta_M}-1)a^+K\le 1.
\end{split}
\end{equation*}



Claim (iii):
 $\th:=Ke^{-\de \zeta_M}>\th_1$.
 
This claim is equivalent 
to $f(\th)< (1-e^{-\de \zeta_M})^{-1}$.
From (H2), $\th=Ke^{-\de \zeta_M}\ge K-\frac{1}{a^+}e^{-\de \zeta_M}$ and, since
 $f$ is decreasing,
 \begin{equation}\label{theta}
\begin{split}
(1-e^{-\de \zeta_M})f(\th)&\le (1-e^{-\de \zeta_M})f\big(K-\frac{1}{a^+}e^{-\de \zeta_M}\big)\\
&=(1-e^{-\de \zeta_M})\frac{1}{\de}\sum_jp_j\exp\left (-a_jK+\frac{a_j}{a^+}e^{-\de \zeta_M}\right)\\
&\le(1-e^{-\de \zeta_M})\exp\big (e^{-\de \zeta_M}\big).
\end{split}
\end{equation}
%
%
%
Next, again observe that
$$ \log(1-e^{-\de \zeta_M})+e^{-\de \zeta_M}<0,
$$
since $x+\log (1-x)<0$ for all $x\in (0,1).$ This proves  Claim (iii), ending the proof.
\end{proof}

\begin{rmk} {\rm Clearly  the case $m=1$ in Theorem \ref{thmGA_K1} is recovered by Theorem \ref{thmGA_K2}. To this end, as done in Theorem \ref{thmGAS_K1},  the assumptions in Theorem \ref{thmGA_K2}  together with \eqref{eq:las_K} or \eqref{eq:las_K2} provide sufficient conditions for the global asymptotic stability of $K$.}\end{rmk}
 \begin{rmk}\label{rmk4.6} {\rm Clearly, Theorem \ref{thmGA_K2} could be stated under slightly weaker hypotheses, as long as they allow to conclude that Claims (i)--(iii) in the proof above are satisfied.}\end{rmk}

\begin{rmk} {\rm The equation  $x'(t)=- \delta x(t)+ p x^\ga(t-\tau)e^{-a x(t-\tau)}$ with $\ga\in (0,1)$ has been applied as an economic model, where $\tau$ takes into account the time delay in the process of production of capital or in the reaction to market changes and fluctuations. Recently, the model
\begin{equation}\label{eq:nicholson_ga}
    x'(t)=\beta (t) \bigg( \sum_{j=1}^{m} p_j  x^\ga(t-\tau_j(t)) e^ {-a_j x(t-\sigma_j(t))}-\delta x(t)\bigg)
\end{equation}
with $\ga\in (0,1)$ was  studied in \cite{huang20}. It is worth emphasising  that there is always a positive equilibrium $K$  for \eqref{eq:nicholson_ga} given by the equation $\sum_jp_je^{-a_jK}-\de K^{1-\ga}=0$. Assuming several constraints,  among them that $a^+K(e^{\de \be^+\tau}-1)\le 1$ -- which is a stronger version  of our hypothesis (H2) --,  Huang et al. \cite[Theorem 3.3]{huang20} showed that $K$ is a global attractor for \eqref{eq:nicholson_ga}. We stress that the case $\ga=1$ in \eqref{eq:nicholson_ga} was not covered in \cite{huang20}.}\end{rmk}

 
 Since in general $K$  is not explicitly computable when there are multiple pairs of delays,  a criterion with (H2)  not depending on $K$ is useful. 

\begin{cor}\label{corGA_K2} With the previous notations,
assume $p>\de$, (H1) and
\begin{itemize}
\item[(H2*)] $\dps  \frac{a^+}{a^-}(e^{\delta\zeta_M} -1)\log \frac{p}{\delta} \le 1$.
\end {itemize}
 Then the equilibrium $K$   of \eqref{eq:nicholson} is globally attractive.
\end{cor}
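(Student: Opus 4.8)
The plan is to reduce Corollary \ref{corGA_K2} to Theorem \ref{thmGA_K2} by showing that hypotheses (H1) and (H2*) together imply (H2). Since (H1) appears unchanged in both statements, the only task is to deduce the $K$-dependent inequality (H2), namely $a^+K(e^{\delta\zeta_M}-1)\le 1$, from the $K$-free condition (H2*). The motivation is exactly the remark preceding the corollary: $K$ is not explicitly computable when $m>1$, so I want to replace $K$ by an a priori upper bound expressed through the data $p,\delta,a^-$.

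The key step is to bound $K$ from above without solving \eqref{equilK}. Starting from the defining identity $\sum_{j=1}^m p_j e^{-a_j K}=\delta$ and using $a_j\ge a^-$ together with $K>0$, each term obeys $e^{-a_j K}\le e^{-a^- K}$, whence
\begin{equation*}
\delta=\sum_{j=1}^m p_j e^{-a_j K}\le e^{-a^- K}\sum_{j=1}^m p_j=p\,e^{-a^- K}.
\end{equation*}
Rearranging yields $e^{a^- K}\le p/\delta$, that is, $a^- K\le \log(p/\delta)$.

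With this bound I would then conclude as follows. Because $\zeta_M\ge 0$ forces $e^{\delta\zeta_M}-1\ge 0$, the factorisation $a^+K=\frac{a^+}{a^-}\,(a^- K)$ gives
\begin{equation*}
a^+K(e^{\delta\zeta_M}-1)=\frac{a^+}{a^-}\,(a^- K)\,(e^{\delta\zeta_M}-1)\le \frac{a^+}{a^-}\,\log\frac{p}{\delta}\,(e^{\delta\zeta_M}-1)\le 1,
\end{equation*}
where the first inequality uses $a^- K\le\log(p/\delta)$ and the second is precisely (H2*). This establishes (H2), and Theorem \ref{thmGA_K2} then delivers the global attractivity of $K$.

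I do not expect any genuine obstacle: the whole argument rests on the elementary estimate $a^- K\le\log(p/\delta)$, which lets me trade the non-computable equilibrium for explicit data. The one point requiring care is that the step replacing $a^- K$ by $\log(p/\delta)$ preserves the inequality only because $e^{\delta\zeta_M}-1$ is nonnegative; in the degenerate case $\zeta_M=0$, both (H2) and (H2*) hold trivially, so nothing is lost.
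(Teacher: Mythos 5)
Your proposal is correct and follows essentially the same route as the paper: the paper also derives $a^-K\le\log(p/\delta)$ from the equilibrium identity $f(K)=1$ and then observes that (H2*) implies (H2), reducing the corollary to Theorem \ref{thmGA_K2}. Your write-up merely spells out the intermediate estimate $\delta\le p\,e^{-a^-K}$ that the paper leaves implicit.
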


\begin{proof}
From $f(K)=1$, we get  $a^-K\le \log(p/\de)\le a^+K
$ and consequently (H2*) implies (H2).
\end{proof}

\begin{exmp}\label{ex1} {\rm Inspired by an example in \cite{huang20}, consider \eqref{eq:nicholson} with $m=2$,
$p_1=\frac{3}{50}e^{4},p_2=\frac{1}{25}e^{5}, a_1=\frac{4}{5},a_2=1, \de =0.1$,   and $\be(t),\tau_j(t),\sigma_j(t)\, (j=1,2)$ continuous, non-negative and bounded, with $\be(t)\ge \be^->0$ on $[0,\infty)$. In this situation, the positive equilibrium is explicitly computed as $K=5$.
With the above notations,  $f(x)=\frac{3}{5}e^{4(1-x/5)}+\frac{2}{5}e^{5-x}$, $\th=5e^{-0.1 \zeta_M}$,
$\frac{a^+}{a^-}=\frac{5}{4}<\frac{3}{2}$ and  $|h'(5)|=4.4(e^{0.1 \zeta_M}-1)$. Hence $|h'(5)|\le 1$ if
\begin{equation}\label{zetaExpl1}\zeta_M\le  10\log (\frac{27}{22} )\approx 2.05.\end{equation}
If \eqref{zetaExpl1} holds, then
$(1-e^{-0.1 \zeta_M})f(\th)\le (1-\frac{22}{27})f(\th)\le \frac{1}{27}(3e^{\frac{20}{27}}+2e^{\frac{25}{27}})\approx 0.42<1$. Theorem \ref{thmGA_K2} and Remark \ref{rmk4.6} allow  concluding that $K=5$ is a global attractor of all positive solutions. On the other hand, note that (H2) reads as $\zeta_M\le  10\log (1.2 )\approx 1.82$, a more restrictive imposition.

For instance, choose e.g. $\be(t)=1+\sin^2 t, \sigma_j(t)=|\cos jt|$ and any $\tau_j(t)$ satisfying the general conditions  above, $j=1,2$. Clearly $\zeta_M\le 2$, thus \eqref{zetaExpl1} holds. With the same $\be(t)$ and $\sigma_j(t)=2j\pi\, (j=1,2)$, one  sees that
$\zeta_M=3/2$, thus $\de \zeta_M(2+\log(p/\de))\approx0.98 <1$. From Corollary \ref{cor3.1}, $K$ is globally asymptotically stable.
 }
\end{exmp}
%


\begin{exmp}\label{ex1} {\rm Consider again \eqref{eq:nicholson} with $m=2$, but now suppose that
$p_1=\frac{3}{50},p_2=\frac{1}{25}, a_1=\frac{4}{5},a_2=1, \de =0.09$,   and $\be(t),\tau_j(t),\sigma_j(t)\, (j=1,2)$ satisfy the general conditions  above. In this situation, $p=0.1>0.09$, but the positive equilibrium $K$ is not explicitly given. Instead, we can assert that (H2*) is satisfied if
$$
\zeta_M\le  \frac{100}{9}\log \bigg(1+\frac{4}{5\log \frac{10}{9}} \bigg)\approx 23.90,$$
in which case $K$ is globally attractive.
}
\end{exmp}

\section{Conclusions}
\setcounter{equation}{0}

In the present paper, we have considered a  Nicholson-type equation with multiple pairs of delays, Eq.~\eqref{eq:nicholson}. The presence of multiple mixed monotone nonlinear terms, each one with two different  time-varying delays, may drastically alter the  dynamics and stability of  equilibria of either the associated differential equation without delays, or the DDE with only one delay $\tau_j(t)=\sigma_j(t)\ (1\le j\le m)$ in each nonlinear term \cite{bb2017}.

In \cite{NE_mult}, the authors showed  the global stability of the equilibrium 0 when 
$p:=\sum_j p_j\le \de$.
Here, we have assumed $p>\de$, proved the permanence without additional conditions, and studied the  local stability and global attractivity of the positive equilibrium $K$ of  \eqref{eq:nicholson}. Inspired by the techniques in \cite{MR18}, we provided sufficient conditions for the global attractivity of $K$ which are easily verifiable, see the main result Theorem \ref{thmGA_K2}.  Our criteria do not involve the delays $\tau_j(t)$, depend on the size of the delays $\sigma_j(t)$ in the sense that the limit $\zeta_M$ in \eqref{zeta_M} must not be too large so that (H2) holds (or Claims (ii)-(iii) in the proof of Theorem \ref{thmGA_K2}), and impose a constraint expressed by (H1) on the relative sizes of the several  coefficients $a_j$. Whether one can replace (H1), or alternatively of requirement of $Sf(x)$ negative, by a weaker condition is a question deserving further analysis.
The results presented here  largely generalise the ones obtained in \cite{MR18} for \eqref{Nich_MR}.
 Also, they provide an  answer to an open problem  in \cite{bb2017}, not only concerning the model  \eqref{eq:nicholson_1pair} with  one pair of delays, but the more general equation \eqref{eq:nicholson} with $m$ pairs of time-varying delays. 
 
 The achievements in this paper  also raise some interesting problems.  
 We believe that the application of the present method  would lead to original results if  adopted to other Nicholson-type equations, such as  the model \eqref{eq:nicholson_ga} with $\ga\in (0,1)$, studied in  \cite{huang20}.
 If instead of \eqref{eq:nicholson} one considers the  general nonautonomous model
 \begin{equation}\label{general}
    x'(t)=\sum_{j=1}^{m} p_j(t)  x(t-\tau_j(t)) e^ {-a_j(t) x(t-\sigma_j(t))}-\delta(t) x(t),
\end{equation}
 it is natural to inquire about sufficient conditions for extinction, permanence or global  stability of positive solutions of  such equations. These will be the aims of our future work, where,
since the procedure developed here  is not  applicable  to \eqref{general}, we intend to  exploit different techniques such as the ones proposed in   \cite{Faria21}. The treatment of mixed monotonicity Nicholson systems with patch structure, given by $n$-dimensional versions of \eqref{eq:nicholson}, is clearly a challenge deserving attention and  another strong motivation for  future research.

\section*{Acknowledgements}
This work was  supported by the National Funding from FCT - Funda\c c\~ao para a Ci\^encia e a Tecnologia (Portugal) under project UIDB/04561/2020.

%
%
%
%

\end{document}